\documentclass[11pt]{amsart}
\usepackage{latexsym}
\usepackage{amsmath,amssymb,amsfonts}
\usepackage{graphics}
\usepackage[all]{xy}

\def\Bbb{\mathbb}

\def\eea{\end{eqnarray*}}

\newtheorem{defn}{Definition}
\newtheorem{thm}{Theorem}[section]

\newtheorem{cor}[thm]{Corollary}
\newtheorem{lem}[thm]{Lemma}

\newenvironment{rmk}{\mbox{ }\\{\bf  Remark}\mbox{ }}{
\hfill $\Box$\mbox{}\bigskip}

\textwidth=5in
\textheight=7.5in

\begin{document}
\renewcommand{\theequation}{\thesection.\arabic{equation}}

\title[Finite group actions and $G$-monopole classes]{Finite group actions and $G$-monopole classes on smooth 4-manifolds}

\author{Chanyoung Sung}
\date{\today}

\address{Dept. of mathematics education \\
Korea national university of education}
\email{cysung@kias.re.kr}
\thanks{This work was supported by the National Research Foundation of Korea grant funded by the Korea government. (NRF-2014R1A1A4A01008987)}
\keywords{Seiberg-Witten equations, monopole class, group action}
\subjclass[2010]{57R57, 57R50, 57M60}

\begin{abstract}
On a smooth closed oriented $4$-manifold $M$ with a smooth action
by a compact Lie group $G$,  we define a $G$-monopole class as an
element of $H^2(M;\Bbb Z)$ which is the first Chern class of a
$G$-equivariant Spin$^c$ structure which has a solution of the
Seiberg-Witten equations for any $G$-invariant Riemannian metric
on $M$.

We find $\Bbb Z_k$-monopole classes on some $\Bbb Z_k$-manifolds such as the connected sum of $k$ copies of a 4-manifold with nontrivial mod 2 Seiberg-Witten
invariant or Bauer-Furuta invariant, where the $\Bbb Z_k$-action
is a cyclic permutation of $k$ summands.

As an application, we produce infinitely many exotic non-free actions of $\Bbb
Z_k\oplus H$ on some connected sums of finite number of
$S^2\times S^2$, $\Bbb CP_2$, $\overline{\Bbb CP}_2$, and $K3$
surfaces, where $k\geq 2$, and $H$ is any nontrivial finite group acting freely on $S^3$.
\end{abstract}
\maketitle
\setcounter{section}{0}
\setcounter{equation}{0}

\section{Introduction}
Let $M$ be a smooth closed oriented manifold of dimension 4.
A second cohomology class of $M$ is called a \emph{monopole class}
if it arises as the first Chern class of a Spin$^c$ structure for
which the Seiberg-Witten equations
$$\left\{
\begin{array}{ll} D_A\Phi=0\\
  F_{A}^+=\Phi\otimes\Phi^*-\frac{|\Phi|^2}{2}\textrm{Id},
\end{array}\right.
$$
admit a solution for every choice of a Riemannian metric. Clearly
a basic class, i.e. the first Chern class of a Spin$^c$ structure with a nonzero Seiberg-Witten invariant is a monopole class. However, ordinary Seiberg-Witten invariants which are gotten by the intersection
theory on the moduli space of solutions $(A,\Phi)$ of the above equations is trivial in many
important cases, for example connected sums of 4-manifolds with $b_2^+>0$.

Bauer and Furuta \cite{BF, bau} made a breakthrough in detecting a
monopole class on connected sums of 4-manifolds. Their 
brilliant idea is to generalize the Pontryagin-Thom construction
to the ``monopole map", i.e. a proper map between infinite-dimensional spaces given by the Seiberg-Witten equations,
and take some sort of a stably-framed bordism class of the Seiberg-Witten moduli space as a new 
invariant. However its applications are still limited in that
this new invariant which is expressed as a stable cohomotopy class
is difficult to compute (especially when $b_1\ne 0$), and we are seeking after further refined
invariants of the Seiberg-Witten moduli space.

In the meantime, sometimes we need a solution of the
Seiberg-Witten equations for a specific metric rather than any
Riemannian metric. The case we have in mind is the one when a
manifold $M$ and its Spin$^c$ structure $\frak{s}$ admit a smooth
action by a compact Lie group $G$ and we are concerned with finding
a solution of the Seiberg-Witten equations for any $G$-invariant
metric.

Thus for a $G$-invariant metric on $M$ and a $G$-invariant perturbation of the Seiberg-Witten equations, we consider the \emph{$G$-monopole moduli space} $\frak{X}$ consisting of their $G$-invariant solutions
modulo gauge equivalence. One can easily see that the ordinary
moduli space $\frak{M}$ is acted on by $G$, and $\frak{X}$ turns out to be
a subset of its $G$-fixed point set. The intersection theory on $\frak{X}$ will
give the \emph{$G$-monopole invariant} $SW^{G}_{M,\frak{s}}$
defined first by Y. Ruan \cite{ruan}, which is expected to be
different from the ordinary Seiberg-Witten invariant $SW_{M,\frak{s}}$.

In view of this, the following definition is relevant.
\begin{defn}
Suppose that $M$ admits a smooth action by a compact Lie group $G$
preserving the orientation of $M$.

A second cohomology class of $M$ is called a $G$-\emph{monopole
class} if it arises as the first Chern class of a $G$-equivariant
Spin$^c$ structure for which the Seiberg-Witten equations admit a
$G$-invariant solution for every $G$-invariant Riemannian metric of $M$.
\end{defn}
When a $G$-monopole invariant is nonzero, its first Chern class has
to be a $G$-monopole class. As explain in \cite{sung3}, the cases we are aiming at are those for finite $G$.  If a compact connected Lie group $G$  has positive dimension and is not a torus $T^n$, then $G$ contains a Lie subgroup isomorphic to $S^3$ or $S^3/\Bbb Z_2$, and hence $M$ admitting an effective action of such $G$ must have a $G$-invariant metric of positive scalar curvature by the well-known Lawson-Yau theorem \cite{law-yau} so that $M$ with $b_2^+(M)^G>1$ has no $G$-monopole class, where $b_2^+(M)^G$ denotes the maximal dimension of $G$-invariant positive-definite subspaces of $H^2(M;\Bbb R)$ under the intersection pairing. On the other hand, Seiberg-Witten invariants of a 4-manifold with an effective $S^1$ action were extensively studied by S. Baldridge \cite{bal1, bal2, bal3}.

As another invariant for detecting a $G$-monopole class, one can also generalize the Bauer-Furuta invariant $BF_{M,\frak{s}}$ of \cite{BF, bau} to $\overline{BF}^{G}_{M,\frak{s}}$, which is
roughly the $S^1$-equivariant stable cohomotopy class of the monopole map between
$G$-invariant subsets of  the associated Hilbert bundles over $T^{b_1(M)^G}$ where $b_1(M)^G$ is the dimension of the space of $G$-invariant elements in $H^1(M;\Bbb R)$. As will be discussed later, this is a little different from the $G$-equivariant Bauer-Furuta invariant $BF^G_{M,\frak{s}}$ of \cite{szymik,naka1}, which is the $(G\times S^1)$-equivariant stable cohomotopy class of the monopole map between the original Hilbert bundles over the Picard torus $T^{b_1(M)}$.

Using these invariants, we find $G$-monopole classes in some
connected sums which have vanishing Seiberg-Witten invariants :
\begin{thm}\label{firstth}
Let $M$ and $N$ be  smooth closed oriented connected 4-manifolds satisfying
$b_2^+(M)> 1$ and $b_2^+(N)=0$, and $\bar{M}_k$ for any $k\geq 2$ be the connected sum
$M\#\cdots \#M\# N$ where there are $k$ summands of $M$.

Suppose that $\Bbb Z_k$ acts effectively on $N$ in a smooth
orientation-preserving way such that it is free or has at least one fixed point,
and that $N$ admits a Riemannian metric of positive scalar curvature invariant under the $\Bbb Z_k$-action and a $\Bbb Z_k$-equivariant Spin$^c$ structure $\frak{s}_N$ with $c_1^2(\frak{s}_N)=-b_2(N)$.

Define a $\Bbb Z_k$-action on $\bar{M}_k$ induced from that of $N$
and the cyclic permutation of  the $k$ summands of $M$ glued along
a free orbit in $N$, and let $\bar{\frak{s}}$ be the
Spin$^c$ structure on $\bar{M}_k$ obtained by gluing $\frak{s}_N$ and a Spin$^c$
structure $\frak{s}$ of $M$.

Then for any $\Bbb Z_k$-action on $\bar{\frak{s}}$ covering the above
$\Bbb Z_k$-action on $\bar{M}_k$, $SW^{\Bbb
Z_k}_{\bar{M}_k,\bar{\frak{s}}}$ mod 2 is nontrivial if
$SW_{M,\frak{s}}$ mod 2 is nontrivial, and also $BF^{\Bbb
Z_k}_{\bar{M}_k,\bar{\frak{s}}}$ is nontrivial, if
$BF_{M,\frak{s}}$ is nontrivial.
\end{thm}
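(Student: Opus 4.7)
The plan is to establish a $\mathbb{Z}_k$-equivariant version of Bauer's connected-sum formula, pass to $\mathbb{Z}_k$-fixed points on both sides, and then use the PSC hypothesis on $N$ to identify the surviving factor with $BF_{M,\frak{s}}$ (respectively with $SW_{M,\frak{s}}\bmod 2$). Throughout, the whole argument is an equivariant refinement of the original Bauer-Furuta neck-stretching construction in \cite{BF,bau}, with every analytic choice (cutoff function, finite-dimensional approximation, perturbation) made $\mathbb{Z}_k$-equivariantly.

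First I would choose a $\mathbb{Z}_k$-invariant family of metrics $g_T$ on $\bar{M}_k$ obtained by inserting $k$ long cylindrical necks of length $T$ around the $k$ points of a chosen free $\mathbb{Z}_k$-orbit in $N$ along which the $k$ copies of $M$ are glued. Because the orbit is free, the action permutes the necks cyclically, so each $g_T$ is $\mathbb{Z}_k$-invariant. Equivariant gluing of the Spin$^c$ structures $\frak{s}$ on the $k$ copies of $M$ with the given equivariant $\frak{s}_N$ on $N$ produces $\bar{\frak{s}}$ together with a lift of the $\mathbb{Z}_k$-action. The monopole map for $\bar{M}_k$ then acquires a natural $\mathbb{Z}_k$-action, and by running the usual neck-stretching estimates equivariantly I expect, as $T\to\infty$, a stable equivariant homotopy factorization
\[
\mu_{\bar{M}_k}\;\simeq\;\mu_M^{\wedge k}\wedge \mu_N,
\]
where $\mathbb{Z}_k$ acts on $\mu_M^{\wedge k}$ by cyclic permutation of factors and on $\mu_N$ by its given action. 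This is the technical heart of the proof and is where I expect most of the work to lie.

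Next I would pass to $\mathbb{Z}_k$-fixed points, which is what defines the invariant $BF^{\mathbb{Z}_k}$ as described in the introduction. The fixed point set of the cyclic permutation on a $k$-fold smash is the diagonal, so $(\mu_M^{\wedge k})^{\mathbb{Z}_k}$ is naturally identified with a single copy of $\mu_M$, yielding
\[
BF^{\mathbb{Z}_k}_{\bar{M}_k,\bar{\frak{s}}}\;\simeq\; BF_{M,\frak{s}}\wedge BF^{\mathbb{Z}_k}_{N,\frak{s}_N}.
\]
The PSC hypothesis on $N$, applied via the Weitzenbock formula to $\mathbb{Z}_k$-invariant spinors, forces all $\mathbb{Z}_k$-invariant solutions on $N$ to be reducible. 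The numerical condition $c_1^2(\frak{s}_N)=-b_2(N)$ is then exactly what makes the virtual index of the $\mathbb{Z}_k$-invariant monopole map on $N$ vanish, so that the finite-dimensional approximation of $\mu_N$ restricted to $\mathbb{Z}_k$-fixed subspaces is a homotopy equivalence between spheres of equal dimension, i.e.\ a unit in the relevant stable cohomotopy ring. Therefore $BF^{\mathbb{Z}_k}_{\bar{M}_k,\bar{\frak{s}}}$ and $BF_{M,\frak{s}}$ differ by a unit, establishing the Bauer-Furuta half of the theorem.

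For the mod-$2$ statement about $SW^{\mathbb{Z}_k}_{\bar{M}_k,\bar{\frak{s}}}$ I would apply the same neck-stretching argument directly at the moduli-space level: on a generic $\mathbb{Z}_k$-invariant perturbation, the $\mathbb{Z}_k$-invariant moduli space of $\bar{M}_k$ becomes, for $T$ large, a fibre product of the moduli space of a single copy of $M$ (because diagonal invariance collapses the $k$ copies) with the $\mathbb{Z}_k$-invariant moduli space of $N$, which the PSC hypothesis reduces to a single reducible point. Counting modulo $2$ then identifies $SW^{\mathbb{Z}_k}_{\bar{M}_k,\bar{\frak{s}}}\bmod 2$ with $SW_{M,\frak{s}}\bmod 2$. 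The main obstacle, as indicated, is rigorously justifying the equivariant splitting of the monopole map: one must verify that the standard Bauer-Furuta finite-dimensional approximation can be arranged $\mathbb{Z}_k$-equivariantly, that the neck error estimates survive restriction to $\mathbb{Z}_k$-fixed subbundles, and that the identification of $(\mu_M^{\wedge k})^{\mathbb{Z}_k}$ with the diagonal $\mu_M$ is compatible with the suspension/index bookkeeping. Once this equivariant splitting is in place, the fixed-point calculation and the PSC unit argument for $N$ are formal.
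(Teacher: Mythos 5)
Your Bauer--Furuta half is essentially the paper's argument: the paper also reduces to the splitting $BF^{\Bbb Z_k}_{\bar{M}_k,\bar{\frak{s}}}=BF_{M,\frak{s}}\wedge BF^{\Bbb Z_k}_{N,\frak{s}_N}$, identifies the $\Bbb Z_k$-invariant part of the $k$ cyclically permuted copies of $M$ with a single (diagonal) copy, and shows $BF^{\Bbb Z_k}_{N,\frak{s}_N}$ restricts to the identity using the PSC metric, the Weitzenb\"ock formula, and the vanishing of the ordinary index $(c_1^2+b_2(N))/8=0$. The only difference is technical: rather than equivariant neck-stretching, the paper runs Bauer's disjoint-union-to-connected-sum surgery (comparing $N\cup\amalg_{i=1}^k(M\cup S^4)$ with $\bar{M}_k\cup\amalg S^4$ via an equivariant ``gluing map $V$'' of Hilbert bundles), which is the cleaner route at the stable-cohomotopy level.

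The Seiberg--Witten half has a genuine gap. You treat the relevant space as ``the $\Bbb Z_k$-invariant moduli space,'' asserting that ``diagonal invariance collapses the $k$ copies'' to one copy of $\frak{M}_{M}$. But the fixed-point set $\frak{M}_{\bar{M}_k}^{\Bbb Z_k}$ of the induced action on the glued moduli space is \emph{not} a single diagonal copy: the gluing parameters contribute an extra residual $S^1$, and the fixed-point equation only forces the overall rotation $\theta$ to satisfy $k\theta\equiv 0$ mod $2\pi$, so $\frak{M}_{\bar{M}_k}^{\Bbb Z_k}$ consists of $k$ components, each diffeomorphic to $\frak{M}_{M}\times T^{\nu}$, indexed by the $k$-th roots of unity. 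A count over this set would give $k\cdot SW_{M,\frak{s}}$, which vanishes mod $2$ for even $k$. The invariant $SW^{\Bbb Z_k}$ is defined on the $G$-monopole moduli space $\frak{X}$ (invariant solutions modulo invariant gauge), which is a priori only a subset of the fixed locus; the essential step you are missing is the proof that exactly one of the $k$ components (the one with $\theta=0$) consists of configurations admitting genuinely $\Bbb Z_k$-invariant representatives --- in the paper this is the Sublemma showing that $\sigma^*\alpha=e^{i\theta}\cdot\alpha$ with $\theta\neq 0$ admits no invariant point in its gauge orbit (using a point with nontrivial stabilizer when the action on $N$ is not free, and a separate covering-space argument when it is free). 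Two further points you elide: the theorem is asserted for \emph{any} lift of the $\Bbb Z_k$-action to $\bar{\frak{s}}$, so one must first homotope an arbitrary lift to a canonical one on the gluing necks (the paper's Lemma on liftings); and when $b_1(N)^{\Bbb Z_k}=\nu>0$ the invariant contribution of $N$ is a torus $T^{\nu}$, not ``a single reducible point,'' so nontriviality of $SW^{\Bbb Z_k}$ requires pairing against the $\mu$-classes of a basis of $H_1(N;\Bbb R)^{\Bbb Z_k}$ rather than against powers of $U$ alone.
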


As an application of a $G$-monopole class to differential
topology, we can detect exotic smooth group actions on some smooth
4-manifolds, by which we mean topologically equivalent but
smoothly inequivalent actions. We say that two smooth group actions $G_1$ and $G_2$ on a smooth manifold $M$ is $C^m$-equivalent for $m=0,1,\cdots,\infty$, if there exists a $C^m$-homeomorphism $f:M\rightarrow M$ such that $$G_1=f\circ G_2\circ f^{-1}.$$
Such exotic smooth actions of finite groups on smooth 4-manifolds have been found abundantly, for eg, \cite{FS1, CS, Go1, Ak, Go2, Ue, FS2}. But all of
them are either free or cyclic actions.

In the final section, we use $G$-monopole invariants
to find infinitely many non-free  non-cyclic exotic group actions.
 For example, for $k\geq 2$ and any nontrivial finite group $H$ acting freely on $S^3$, there exist infinitely many exotic non-free actions of $\Bbb Z_k\oplus H$  on some connected sums of finite numbers of $S^2\times S^2$, $\Bbb CP_2$, $\overline{\Bbb CP}_2$, and $K3$ surfaces.

The above theorem may be generalized to other types of $N$, and we leave a more complete study to a future project. Applications to Riemannian geometry such as $G$-invariant Einstein metrics and $G$-Yamabe invariant are rather straightforward from the curvature estimates, and are dealt with in \cite{sung3}.

\section{$G$-monopole invariant}
Let $M$ be  a  smooth closed oriented 4-manifold.  Suppose that a
compact Lie group $G$ acts on $M$ smoothly preserving the
orientation, and this action lifts to an action on a Spin$^c$
structure $\mathfrak{s}$ of $M$. Once there is a lifting, any
other lifting differs from it by an element of $Map(G\times M,
S^1)$. We fix a lifting and put a $G$-invariant Riemannian metric
$g$ on $M$. Then the associated spinor bundles $W_{\pm}$ are also
$G$-equivariant, and we let $\Gamma(W_{\pm})^G$ be the set of its
$G$-invariant sections. When we put $G$ as a superscript on the right of a set,
we always mean the subset consisting of its $G$-invariant
elements. Thus $\mathcal{A}(W_+)^G$ is the space of $G$-invariant
connections on $\det (W_+)$, which is identified as the space $\Gamma(\Lambda^1(M;i\Bbb R))^G$ of
$G$-invariant purely-imaginary valued 1-forms on $M$, and $\mathcal{G}^G=Map(M,S^1)^G$ is the group of $G$-invariant gauge transformations.

The perturbed Seiberg-Witten equations give a smooth map $$H:
\mathcal{A}(W_+)^G\times \Gamma(W_+)^G\times \Gamma(\Lambda^2_+(M;i\Bbb
R))^G\rightarrow \Gamma(W_-)^G\times \Gamma(\Lambda^2_+(M;i\Bbb R))^G$$ defined by
$$H(A,\Phi,\varepsilon)=(D_A\Phi,
F_A^+-\Phi\otimes\Phi^*+\frac{|\Phi|^2}{2}\textrm{Id}+\varepsilon),$$
where the domain and the range are endowed with $L_{l+1}^2$ and
$L_l^2$  Sobolev norms for a positive integer $l$ respectively,
and $D_A$ is a Spin$^c$ Dirac operator. The $G$-monopole moduli
space $\frak{X}_\varepsilon$ for a perturbation $\varepsilon$ is
then defined as
$$\frak{X}_\varepsilon:=H^{-1}_\varepsilon(0)/ \mathcal{G}^G$$ where $H_\varepsilon$ denotes
$H$ restricted to $\mathcal{A}(W_+)^G\times \Gamma(W_+)^G\times\{\varepsilon\}$.

In the followings, we give a detailed proof that $\frak{X}_\varepsilon$ for generic $\varepsilon$ and finite $G$ is a smooth compact manifold, because some statements in \cite{ruan, cho}
are incorrect or without proofs.
\begin{lem}\label{saveyou}
The quotient map $$p : (\mathcal{A}(W_+)^G\times (\Gamma(W_+)^G-\{0\}))/ \mathcal{G}^G\rightarrow (\mathcal{A}(W_+)^G\times (\Gamma(W_+)^G-\{0\}))/
\mathcal{G}$$ is bijective, and hence $\mathfrak{X}_\varepsilon$ is a subset of the
ordinary Seiberg-Witten moduli space $\frak{M}_\varepsilon$.
\end{lem}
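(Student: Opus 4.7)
The plan is to observe that surjectivity of $p$ is immediate: every $\mathcal{G}$-orbit in the codomain contains, by construction, a $G$-invariant representative (any element of $\mathcal{A}(W_+)^G\times(\Gamma(W_+)^G-\{0\})$ itself), so the real content lies in injectivity. I would prove injectivity by replacing the unknown gauge transformation with its $G$-translates and invoking triviality of the gauge stabilizer.

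Suppose $(A_1,\Phi_1),(A_2,\Phi_2)\in\mathcal{A}(W_+)^G\times(\Gamma(W_+)^G-\{0\})$ and some $u\in\mathcal{G}$ satisfies $u\cdot(A_1,\Phi_1)=(A_2,\Phi_2)$. The key ingredient is the commutation rule $\sigma_g\circ\gamma_u=\gamma_{\sigma_g u}\circ\sigma_g$, where $\sigma_g$ denotes the $G$-action on the configuration space (induced by the fixed equivariant lift on $\mathfrak{s}$), $\gamma_u$ denotes the gauge action, and $(\sigma_g u)(x):=u(g^{-1}x)$ is the natural $G$-action on $\mathcal{G}=\mathrm{Map}(M,S^1)$. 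Applying $\sigma_g$ to both sides of $u\cdot(A_1,\Phi_1)=(A_2,\Phi_2)$ and using the $G$-invariance of both pairs yields $(\sigma_g u)\cdot(A_1,\Phi_1)=(A_2,\Phi_2)$, so $u^{-1}(\sigma_g u)$ lies in the $\mathcal{G}$-stabilizer of $(A_1,\Phi_1)$ for every $g\in G$.

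It then suffices to show this stabilizer is trivial whenever $\Phi_1\not\equiv 0$, which is the standard irreducibility computation. Any $v\in\mathcal{G}$ fixing $A_1$ satisfies $v^{-1}dv=0$ from the formula for the gauge action on connections, hence $v$ is locally constant; assuming $M$ is connected, $v$ is a global constant $c\in S^1$, and $v\Phi_1=\Phi_1$ at any point where $\Phi_1$ is nonzero forces $c=1$. Therefore $\sigma_g u=u$ for every $g\in G$, i.e.\ $u\in\mathcal{G}^G$, so $(A_1,\Phi_1)$ and $(A_2,\Phi_2)$ already lie in the same $\mathcal{G}^G$-orbit. The stated inclusion $\mathfrak{X}_\varepsilon\subset\mathfrak{M}_\varepsilon$ is then a direct consequence of this bijection applied to the $G$-invariant solution set.

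The principal bookkeeping point—really the only place one could slip—is verifying the commutation identity $\sigma_g\circ\gamma_u=\gamma_{\sigma_g u}\circ\sigma_g$ carefully in the presence of the chosen $G$-equivariant structure on $W_+$: one must check that the $G$-actions on sections and on connections interact with pointwise multiplication by $S^1$-valued functions exactly via pullback on the base. Once this is in place, the reducibility calculation above completes the proof.
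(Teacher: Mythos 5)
Your proof is correct and reaches the same conclusion by a slightly different packaging of the same underlying facts. The paper argues directly on the gauge transformation: from $A_1=A_2-2d\ln\gamma$ it deduces that $d\ln\gamma$ is $G$-invariant, and then shows the set $S\subset M$ where $\gamma$ itself is $G$-invariant is nonempty (it contains $\{\Phi_1\neq 0\}$), closed, and open, hence all of $M$. You instead conjugate: the identity $\sigma_g\circ\gamma_u=\gamma_{\sigma_g u}\circ\sigma_g$ shows $u^{-1}(\sigma_g u)$ lies in the $\mathcal{G}$-stabilizer of the irreducible configuration $(A_1,\Phi_1)$, and triviality of that stabilizer gives $u\in\mathcal{G}^G$. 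These are the same computation in disguise --- the trivial-stabilizer lemma is itself proved by ``locally constant, equal to $1$ where $\Phi_1\neq 0$, hence $\equiv 1$ by connectedness,'' which is exactly the paper's open--closed argument --- but your version is cleaner in that it isolates the only two inputs (equivariance of the gauge action under pullback, and irreducibility) and avoids manipulating the multivalued $\ln\gamma$. Both arguments, like the paper's, silently use connectedness of $M$; you flag it, the paper does not. The one hygiene point you correctly identify, verifying $\sigma_g\circ\gamma_u=\gamma_{\sigma_g u}\circ\sigma_g$, holds because the gauge $S^1$ is central in the Spin$^c$ structure group and so commutes with any chosen equivariant lift.
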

\begin{proof}
Obviously $p$ is surjective, and to show that $p$ is injective, suppose that  $(A_1,\Phi_1)$ and
$(A_2,\Phi_2)$ in $\mathcal{A}(W_+)^G\times (\Gamma(W_+)^G-\{0\})$ are
equivalent under $\gamma\in\mathcal{G}$. Then $$A_1=A_2-2d\ln \gamma, \ \ \ \textrm{and}\ \ \
\Phi_1=\gamma\Phi_2.$$ By the first equality, $d\ln \gamma$ is $G$-invariant.

Let $S$ be the subset of $M$ where $\gamma$ is $G$-invariant. By the continuity of $\gamma$, $S$ must be a closed subset. Since $S$ contains a nonempty subset $$\{x\in M| \Phi_1(x)\ne 0\},$$ $S$ is nonempty.
It suffices to show that ${S}$ is open. Let $x_0\in {S}$. Then we have that for any $g\in G$, $$g^*\ln\gamma(x_0)=\ln\gamma(x_0), \ \ \ \textrm{and}\ \ \ g^*d\ln\gamma=d\ln\gamma,$$ which implies that $g^*\ln\gamma=\ln\gamma$ on an open neighborhood of $x_0$ on which $g^*\ln\gamma$ and $\ln\gamma$ are well-defined. By the compactness of $G$, there exists an open neighborhood of $x_0$ on which $g^*\ln\gamma$ is well-defined for all $g\in G$, and $\ln \gamma$ is $G$-invariant. This proves the openness of $S$.
\end{proof}

As in the ordinary Seiberg-Witten moduli space, the transversality is
obtained by a generic perturbation $\varepsilon$ :
\begin{lem}
$H$ is a submersion at each $(A,\Phi,\varepsilon)\in H^{-1}(0)$ for nonzero
$\Phi$.
\end{lem}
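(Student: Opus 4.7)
The plan is to show that the derivative $dH$ at the given point is surjective by exploiting the free perturbation direction $\varepsilon$ together with an $L^2$-orthogonal complement argument adapted to the $G$-invariant setting. Since $\varepsilon$ enters the second component of $H$ affinely as the identity, $dH$ automatically surjects onto $\Gamma(\Lambda^2_+(M;i\mathbb{R}))^G$; the content is therefore to show that the linearization in $(a,\phi)$,
$$L(a,\phi)=D_A\phi+\tfrac{1}{2}a\cdot\Phi\ \in\ \Gamma(W_-)^G,$$
hits every $G$-invariant negative spinor (in the appropriate Sobolev completion).

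Suppose $\psi\in\Gamma(W_-)^G$ is $L^2$-orthogonal to the image of $L$. Setting $a=0$ gives $\langle D_A^*\psi,\phi\rangle_{L^2}=0$ for all $\phi\in\Gamma(W_+)^G$; the key observation is that $D_A^*\psi$ is itself $G$-invariant, because $A$ and $\psi$ are, so self-testing with $\phi=D_A^*\psi$ forces $D_A^*\psi=0$. Analogously, setting $\phi=0$ yields a pointwise identity $\langle\psi,a\cdot\Phi\rangle=\langle\xi,a\rangle$ for a $G$-invariant 1-form $\xi\in\Gamma(\Lambda^1(M;i\mathbb{R}))^G$ (the adjoint of Clifford multiplication against $\Phi$ applied to $\psi$), and the same self-testing trick gives $\xi\equiv 0$.

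The final step is pointwise and analytic. At any $x$ with $\Phi(x)\neq 0$, Clifford multiplication $a\mapsto a\cdot\Phi(x)$ is an $\mathbb{R}$-linear isomorphism $T_x^*M\otimes i\mathbb{R}\to W_-|_x$ (both sides are real 4-dimensional), so $\xi(x)=0$ forces $\psi(x)=0$. Hence $\psi$ vanishes on the nonempty $G$-invariant open set $\{\Phi\neq 0\}$, and Aronszajn's unique continuation theorem applied to the elliptic equation $D_A^*\psi=0$ upgrades this to $\psi\equiv 0$. The only place where the $G$-invariance restriction could a priori cause trouble is the passage from orthogonality against $G$-invariant test sections to actual vanishing; this is precisely what the self-testing trick handles, and it works because the data $(A,\Phi,\psi)$ being $G$-invariant forces $D_A^*\psi$ and $\xi$ to lie in the very subspace we are testing against, so no genuine obstruction arises compared to the classical transversality argument.
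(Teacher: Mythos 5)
Your proof is correct, but it takes a genuinely different route from the paper's. The paper argues top--down: given $\psi\in\Gamma(W_-)^G$, it invokes the known surjectivity of $dH$ in the \emph{ordinary} (non-invariant) setting to produce some preimage $(a,\varphi)$, and then averages over $G$; since $dH$ is $G$-equivariant and $\psi$ is $G$-invariant, the averaged pair $(\tilde a,\tilde\varphi)$ is a $G$-invariant preimage. That argument is shorter and outsources all the analysis to the classical transversality result. You instead rerun the classical orthogonal-complement argument entirely inside the invariant subspaces, and your ``self-testing'' observation --- that $D_A^*\psi$ and the $1$-form $\xi$ are themselves $G$-invariant, hence admissible test objects --- is exactly the right reason why restricting to invariant sections costs nothing; combined with the pointwise isomorphism $a\mapsto a\cdot\Phi(x)$ on $\{\Phi\neq 0\}$ and Aronszajn unique continuation, this is a complete and self-contained proof. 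Two small points deserve a sentence each if you write this up: (i) vanishing of the $L^2$-orthogonal complement gives density of the image, and you should note that the image is also closed because it contains the image of the $G$-invariant Dirac operator, which is Fredholm (the invariant parts of $\ker D_A$ and $\mathrm{coker}\, D_A$ are finite-dimensional), so that density upgrades to surjectivity; (ii) to legitimately test against $D_A^*\psi$ and $\xi$ you need either a word of elliptic regularity or the identity $\langle u,\phi\rangle_{L^2}=\langle u,\tilde\phi\rangle_{L^2}$ for $G$-invariant $u$ and averaged $\tilde\phi$ --- which is, amusingly, the paper's averaging trick reappearing on the adjoint side.
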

\begin{proof}
Obviously $d{H}$ restricted to the last factor of the domain is onto
the last factor of the range. Using the surjectivity in the ordinary
setting,  for any element $\psi\in \Gamma(W_-)^G$, there exists
an element  $(a,\varphi)\in  \mathcal{A}(W_+)\times \Gamma(W_+)$
such that $d{H}(a,\varphi,0)=\psi$. The average
$$(\tilde{a},\tilde{\varphi}):=\int_G h^*(a,\varphi)\
d\mu(h):=( \int_G h^*a\ d\mu(h) , \int_G h^*\varphi\ d\mu(h))$$
using a unit-volume $G$-invariant metric on $G$ is an element of
$\mathcal{A}(W_+)^G\times \Gamma(W_+)^G$. It follows from the
smoothness of the $G$-action that every $h^*(a,\varphi)$ and hence
$(\tilde{a},\tilde{\varphi})$ belong to the same Sobolev space as
$(a,\varphi)$.
Moreover it still satisfies
\begin{eqnarray*}
d{H}(\tilde{a},\tilde{\varphi},0)&=& \int_G dH (h^*(a,\varphi,0))\
d\mu(h)\\ &=& \int_G h^* dH ((a,\varphi,0))\
d\mu(h)\\ &=& \int_G h^* \psi\
d\mu(h)\\ &=& \psi,
\end{eqnarray*}
where we used the fact that $d{H}$ is a $G$-equivariant differential operator. This completes the proof.
\end{proof}

Assuming that $b_2^{+}(M)^G$ is nonzero,
$\mathfrak{X}_\varepsilon$ consists of irreducible solutions. By
the above lemma, $\cup_{\varepsilon}\mathfrak{X}_\varepsilon$ is a
smooth submanifold, and applying Smale-Sard theorem to the
projection map onto $\Gamma(\Lambda^2_+(M;i\Bbb R))^G$,
$\mathfrak{X}_\varepsilon$  for generic $\varepsilon$ is also
smooth. (Nevertheless $\frak{M}_\varepsilon$ for that $\varepsilon$ may not be smooth in general. Its obstruction is explained in \cite{cho}.)   From now on, we will always assume that a generic
$\varepsilon$ is chosen so that $\frak{X}_\varepsilon$ is smooth,
and often omit the notation of $\varepsilon$, if no confusion
arises.

Its dimension and orientability can be obtained in the same way as the ordinary Seiberg-Witten moduli space. The linearization $dH$ is deformed by a homotopy to
$$d^++2d^* : \Gamma(\Lambda^1)^G\rightarrow
\Gamma(\Lambda^0\oplus\Lambda^2_+)^G$$ and $$D_A :
\Gamma(W_+)^G\rightarrow \Gamma(W_-)^G$$ so that the virtual dimension of  $\mathfrak{X}$ is equal to $$\dim H_1(M;\Bbb R)^G-b_2^+(M)^G-1+2(\dim_{\Bbb C}(\ker D_A)^G-\dim_{\Bbb C}(\textrm{coker} D_A)^G),$$
and its orientation can be assigned by fixing the homology orientation of $H_1(M;\Bbb R)^G$ and $H_2^+(M;\Bbb R)^G$. When $G$ is finite, one can use Lefschetz-type formula to explicitly compute the last term $\textrm{ind}^G D_A$ in the above formula. For more details, one may consult \cite{cho}.

\begin{thm}\label{cpt}
When $G$ is finite, $\mathfrak{X}_\varepsilon$ for any $\varepsilon$ is compact.
\end{thm}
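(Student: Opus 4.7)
\noindent The plan is to run the standard compactness argument for the Seiberg--Witten moduli space within the $G$-invariant setting, exploiting the fact that every step of that argument (pointwise curvature bounds, elliptic gauge fixing, bootstrap) is compatible with the action of the finite group $G$. An equivalent route is to invoke the inclusion $\mathfrak{X}_\varepsilon\hookrightarrow\mathfrak{M}_\varepsilon$ from the previous lemma and show closedness inside the compact space $\mathfrak{M}_\varepsilon$, but the direct argument below is more self-contained.

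Let $[A_n,\Phi_n]\in\mathfrak{X}_\varepsilon$ be a sequence, with $G$-invariant representatives $(A_n,\Phi_n)\in\mathcal{A}(W_+)^G\times\Gamma(W_+)^G$. The Weitzenb\"ock identity applied to $D_{A_n}\Phi_n=0$, coupled with the curvature equation, gives the classical pointwise estimate $|\Phi_n|^2\leq C(g,\varepsilon)$ via the maximum principle, and this in turn forces $\|F_{A_n}^+\|_{L^\infty}$ to be uniformly bounded. Fix a $G$-invariant reference connection $A_0$ on $\det W_+$ and write $A_n=A_0+a_n$ with $a_n\in\Gamma(\Lambda^1\otimes i\Bbb R)^G$. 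Since $d$, $d^*$ and the Hodge Laplacian all commute with the $G$-action, the Hodge decomposition of $\Gamma(\Lambda^1\otimes i\Bbb R)$ restricts to one of $\Gamma(\Lambda^1\otimes i\Bbb R)^G$; hence one can place $a_n$ in $G$-invariant Coulomb gauge by some $\gamma_n\in\mathcal{G}^G$, normalising the harmonic part within a fundamental domain of the sublattice of integer periods inside $H^1(M;i\Bbb R)^G$.

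The gauge condition $d^*a_n=0$ combined with the $L^\infty$ bound on $d^+a_n=F_{A_n}^+-F_{A_0}^+$ yields a uniform $L^2_1$ bound on $a_n$ through the elliptic estimate for $d^*+d^+$. Substituting into the Dirac equation $D_{A_0}\Phi_n=-\tfrac{1}{2} a_n\cdot\Phi_n$ and bootstrapping with the elliptic regularity of $D_{A_0}$ and $d^*+d^+$ gives uniform $L^2_{l+1}$ bounds on $(a_n,\Phi_n)$, so a subsequence converges in $L^2_l$ (and in $C^\infty$ after further iteration) to some $(A_\infty,\Phi_\infty)$. The limit is $G$-invariant because $C^0$-convergence preserves invariance, and it satisfies $H_\varepsilon(A_\infty,\Phi_\infty,\varepsilon)=0$ by passage to the limit; thus $[A_\infty,\Phi_\infty]\in\mathfrak{X}_\varepsilon$.

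The step most deserving of care will be the $G$-equivariance of the Coulomb gauge fixing, particularly the reduction modulo the Picard lattice. For finite $G$ this is not a genuine obstacle: given any Coulomb gauge transformation, its averaged logarithm over $G$ (legitimate by the argument of the first lemma above, since the gauge group is abelian) yields a $G$-invariant replacement, and the lattice of $G$-invariant integer periods has full rank in $H^1(M;i\Bbb R)^G$ as the fixed subspace of a finite group acting on the Picard lattice. All remaining ingredients are the universal $L^\infty$ and elliptic estimates, which are indifferent to equivariance.
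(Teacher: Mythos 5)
Your overall strategy is the same as the paper's: the pointwise Weitzenb\"ock bound on $|\Phi_n|$, the resulting bound on $F_{A_n}^+$, and the elliptic bootstrap all go through verbatim in the invariant setting, and the only step requiring genuine care is putting the connections into a $G$-invariant Coulomb gauge with the harmonic part confined to a bounded fundamental domain. You correctly isolate this as the crux, but the device you offer for it fails. The ``averaged logarithm'' $\frac{1}{|G|}\sum_{g\in G}g^*\ln\gamma$ of a gauge transformation $\gamma$ is in general not the logarithm of any gauge transformation: each branch of $\ln\gamma$ is well defined only modulo $2\pi i\Bbb Z$, so the sum $\sum_{g}g^*\ln\gamma$ is well defined modulo $2\pi i\Bbb Z$ and exponentiates to the legitimate $G$-invariant gauge transformation $\prod_{g}g^*\gamma$; but after dividing by $|G|$ the ambiguity becomes $\frac{2\pi i}{|G|}\Bbb Z$, and the exponential is multivalued whenever the periods of $d\ln\gamma$ are not all divisible by $|G|$. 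The appeal to the first lemma does not rescue this, since there one already knew that $d\ln\gamma$ was $G$-invariant and that $\gamma$ itself was $G$-invariant on a nonempty open set. Equivalently: the sublattice of $H^1(M;\Bbb Z)^G$ realized as $d\ln\gamma$ for $G$-invariant $\gamma$ need not be all of $H^1(M;\Bbb Z)^G$, so ``$H^1(M;\Bbb Z)^G$ has full rank'' is not by itself the statement you need.

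The gap is easily repaired, and the repair is exactly the content of the paper's $G$-equivariant gauge-fixing lemma: the achievable sublattice contains $|G|\cdot H^1(M;\Bbb Z)^G$ (gauge away $|G|$ times a primitive invariant class $a^h=-d\ln\sigma$ by the $G$-invariant transformation $\prod_{g}g^*\sigma$), and a full-rank sublattice suffices for compactness because the resulting fundamental domain for the harmonic parts, while larger by a factor of $|G|$ in each direction, is still bounded; the leftover piece $m\,a^h$ with $m\in[0,|G|)$ is simply absorbed into the additive constant $K$ of the gauge-fixing estimate. With that substitution for your averaged-logarithm step, your argument coincides with the paper's proof.
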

\begin{proof}
Following the proof for the ordinary Seiberg-Witten moduli space, we need the $G$-equivariant version of the gauge fixing lemma.
\begin{lem}
Let $\frak{L}$ be a $G$-equivariant complex line bundle over $M$  with a hermitian metric, and $A_0$ be a fixed $G$-invariant smooth unitary connection on it.

Then for any $l\geq 0$ there are constants $K, C>0$ depending on $A_0$ and $l$ such that for any $G$-invariant $L^2_l$ unitary connection $A$ on $\frak{L}$ there is a $G$-invariant $L^2_{l+1}$ change of gauge $\sigma$ so that $\sigma^*(A)=A_0+\alpha$ where $\alpha\in L^2_l(T^*M\otimes i\Bbb R)^G$ satisfies
$$d^*\alpha=0,\ \ \ \textrm{and}\ \ \ \ ||\alpha||^2_{L^2_l}\leq C||F^+_A||^2_{L^2_{l-1}}+K.$$
\end{lem}
\begin{proof}
We know that a gauge-fixing $\sigma$ with the above estimate always exists, but we need to prove the existence of $G$-invariant $\sigma$.
Write $A$ as $A_0+a$ where $a\in L^2_l(T^*M\otimes i\Bbb R)^G$. Let $a=a^{harm}+df+d^*\beta$ be the Hodge decomposition. By the $G$-invariance of $a$, so are  $a^{harm}, df$, and $d^*\beta$.
Applying the ordinary gauge fixing lemma to $A_0+d^*\beta$, we have $$||d^*\beta||^2_{L^2_l}\leq C'||F^+_{A_0+d^*\beta}||^2_{L^2_{l-1}}+K'=C'||F^+_A||^2_{L^2_{l-1}}+K'$$ for some constants $C',K'>0$.
Defining a $G$-invariant $i\Bbb R$-valued function $f_{av}=\frac{1}{|G|}\sum_{g\in G}g^*f$, we have
$$df=\frac{1}{|G|}\sum_{g\in G}g^*df=d(f_{av})=-d\ln \exp(-{f_{av})},$$ and hence $df$ can be gauged away by a $G$-invariant gauge transformation $\exp(-f_{av})$.
Write $a^{harm}$ as $(n|G|+m)a^{h}$ for $m\in [0,|G|)$ and an integer $n\geq 0$, where $a^h\in H^1(M;\Bbb Z)^G$ is not a positive multiple of another element of $H^1(M;\Bbb Z)^G$. There exists $\frak{g}\in \mathcal{G}$ such that $a^h=-d\ln \frak{g}$. In general $\frak{g}$ is not $G$-invariant, but $$|G|a^h=\sum_{g\in G}g^*a^h=-d\ln \prod_{g\in G}g^*\frak{g},$$ and hence $n|G|a^h$ can be gauged away by a $G$-invariant gauge transformation $\prod_{g\in G}g^*\frak{g}^n$.
In summary, $A_0+a$ is equivalent to $A_0+ma^{h}+d^*\beta$ after a $G$-invariant gauge transformation, and
\begin{eqnarray*}
||ma^{h}+d^*\beta||^2_{L^2_l}&\leq& (||ma^{h}||_{L^2_l}+||d^*\beta||_{L^2_l})^2\\ &\leq& |G|^2||a^{h}||_{L^2_l}^2+2|G|||a^{h}||_{L^2_l}||d^*\beta||_{L^2_l}+||d^*\beta||_{L^2_l}^2\\ &\leq& 3|G|^2||a^{h}||^2_{L^2_l}+ 3||d^*\beta||_{L^2_l}^2  \\   &=& K''+3C'||F^+_A||^2_{L^2_{l-1}}+3K'
\end{eqnarray*}
for a constant $K''>0$. This completes the proof.
\end{proof}
Now the rest of the compactness proof proceeds in the same way as the ordinary case using the Weitzenb\"ock formula and standard elliptic and Sobolev estimates. For details the readers are referred to \cite{morgan}.
\end{proof}

\begin{rmk}
If $G$ is not finite, $\mathfrak{X}_\varepsilon$ may not be compact.

For example, consider $M=S^1\times Y$ with the trivial Spin$^c$
structure and its obvious $S^1$ action, where $Y$ is a closed
oriented 3-manifold. For any $n\in\Bbb Z$, $n d\theta$ where
$\theta$ is the coordinate on $S^1$ is an $S^1$-invariant
reducible solution. Although $n d\theta$ is gauge equivalent to 0,
but never via an $S^1$-invariant gauge transformation which is an
element of the pull-back of $C^\infty(Y,S^1)$. Therefore as
$n\rightarrow \infty$, $n d\theta$ diverges modulo
$\mathcal{G}^{S^1}$, which proves that $\mathfrak{X}_0$ is
non-compact.
\end{rmk}

In the rest of this paper, we assume that $G$ is finite. Then note that $G$ induces smooth actions on $$\mathcal{C}:=\mathcal{A}(W_+)\times \Gamma(W_+),$$ $$\mathcal{B}^*=(\mathcal{A}(W_+)\times (\Gamma(W_+)-\{0\}))/ \mathcal{G},$$ and also the Seiberg-Witten moduli space $\frak{M}$ whenever it is smooth.

Since $\frak{X}_\varepsilon$ is a subset of $\frak{M}_\varepsilon$, (actually a subset of the fixed locus $\mathfrak{M}^G_\varepsilon$ of a $G$-space $\frak{M}_\varepsilon$), we can define the
$G$-monopole invariant $SW^G_{M,\mathfrak{s}}$ by integrating the same universal cohomology classes as in the ordinary Seiberg-Witten invariant $SW_{M,\mathfrak{s}}$. Thus using the $\Bbb Z$-algebra isomorphism $$\mu_{M,\frak{s}} : \Bbb Z[H_0(M;\Bbb Z)]\otimes \wedge^*H_1(M;\Bbb Z)/\textrm{torsion}\ \tilde{\rightarrow}\ H^*(\mathcal{B}^*;\Bbb Z),$$ 
we define it as a function $$SW^G_{M,\frak{s}}: \Bbb Z[H_0(M;\Bbb
Z)]\otimes \wedge^*H_1(M;\Bbb Z)/\textrm{torsion}\rightarrow \Bbb
Z$$ $$\alpha\mapsto \langle [\frak{X}],\mu_{M,\frak{s}}(\alpha)
\rangle,$$ which is set to be 0 when the degree of
$\mu_{M,\frak{s}}(\alpha)$ does not match $\dim \frak{X}$. To be
specific, for  $[c]\in H_1(M,\Bbb Z)$,
$$\mu_{M,\frak{s}}([c]):=Hol_c^*([d\theta])$$ where $[d\theta]\equiv 1\in H^1(S^1,\Bbb Z)$ and  $Hol_c: \mathcal{B}^*\rightarrow S^1$ is given by the holonomy of each connection around $c$, and  $\mu_{M,\frak{s}}(U)$  for $U\equiv 1\in H_0(M,\Bbb Z)$ is given by the first
Chern class of the $S^1$-bundle
$$\mathcal{B}^*_o=(\mathcal{A}(W_+)\times (\Gamma(W_+)-\{0\}))/
\mathcal{G}_o$$ over $\mathcal{B}^*$ where $\mathcal{G}_o=\{\frak{g}\in
\mathcal{G}| \frak{g}(o)=1\}$ is the based gauge group for a fixed base
point $o\in M$. (The $S^1$-bundles obtained by choosing a
different base point are all isomorphic by the connectedness of
$M$.)

As in the ordinary case, a different choice of a $G$-invariant
metric and a $G$-invariant perturbation $\varepsilon$ gives a
cobordant $\frak{X}$ so that $SW^G_{M,\mathfrak{s}}$ is
independent of such choices, if $b_2^{+}(M)^G> 1$. When
$b_2^{+}(M)^G= 1$, one should get an appropriate wall-crossing
formula.


When $\frak{M}$ happens to be smooth for a $G$-invariant
perturbation, the induced $G$-action on it is a smooth action, and
hence $\mathfrak{M}^G$ is a smooth submanifold. Moreover if the
finite group action is free, then $\pi: M\rightarrow M/G$ is a
covering, and $\frak{s}$ is the pull-back of a Spin$^c$ structure
on $M/G$, which is determined up to
the kernel of $\pi^*: H^2(M/G,\Bbb Z)\rightarrow H^2(M,\Bbb Z),$ and  all the irreducible solutions of the upstairs is precisely the pull-back of the corresponding irreducible solutions of the downstairs :
\begin{thm}[\cite{RW, naka}]\label{nakamur}
Let $M$, $\mathfrak{s}$, and $G$ be as above. Under the assumption that $G$ is finite and
the action is free, for a $G$-invariant generic perturbation $$\frak{X}_{M,\mathfrak{s}}=\mathfrak{M}_{M/G,\mathfrak{s}'} \ \ \ \ \textrm{and} \ \ \ \ \mathfrak{M}^G_{M,\mathfrak{s}}\backsimeq\coprod_{c\in \ker \pi^*}\mathfrak{M}_{M/G,\mathfrak{s}'+c},$$ where the second one is a homeomorphism in general, and  $\mathfrak{s}'$ is the Spin$^c$ structure on $M/G$ induced from  $\frak{s}$ and its $G$-action.
\end{thm}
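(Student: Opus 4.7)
The plan is to exploit the fact that a free finite action makes $\pi : M \to M/G$ a covering, under which $G$-equivariant bundles on $M$ correspond to bundles on $M/G$. First I would descend the data: a $G$-invariant Riemannian metric gives $M/G$ a metric for which $\pi$ is a local isometry, and $G$-invariant connections, sections, self-dual 2-forms, and gauge transformations on the $G$-equivariant Spin$^c$ structure $\mathfrak{s}$ correspond bijectively to their counterparts on the quotient Spin$^c$ structure $\mathfrak{s}'$. Since the Seiberg-Witten equations are pointwise and $\pi$ is a local isometry, a $G$-invariant solution $(A, \Phi)$ for $\mathfrak{s}$ descends to a solution for $\mathfrak{s}'$ on $M/G$, and any solution downstairs pulls back to a $G$-invariant one upstairs. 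Once one verifies that a generic $G$-invariant perturbation pulls back from (and pushes down to) a generic perturbation, this yields the identification $\mathfrak{X}_{M,\mathfrak{s}} = \mathfrak{M}_{M/G,\mathfrak{s}'}$.

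For the second equality, the key point is that a class $[A,\Phi] \in \mathfrak{M}^G_{M,\mathfrak{s}}$ need not admit a $G$-invariant representative: by definition $g^*(A,\Phi) \sim_{\mathcal{G}} (A,\Phi)$ for every $g \in G$, but the gauge transformations realizing this may fail to vary consistently in $g$. Fix an irreducible representative; since the stabilizer of an irreducible configuration is trivial, there is a unique $\gamma_g \in \mathcal{G}$ with $g^*(A,\Phi) = \gamma_g \cdot (A,\Phi)$, and the cocycle identity $\gamma_{gh} = (h^*\gamma_g)\gamma_h$ shows that $\{\gamma_g\}$ determines a new lift of the $G$-action to the underlying bundles of $\mathfrak{s}$. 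This new lift makes $(A,\Phi)$ genuinely equivariant, and so descends along $\pi$ to a Spin$^c$ structure $\mathfrak{s}''$ on $M/G$ carrying a bona fide solution; since $\pi^*\mathfrak{s}'' = \mathfrak{s} = \pi^*\mathfrak{s}'$, the difference $c := \mathfrak{s}'' - \mathfrak{s}'$ lies in $\ker \pi^*$.

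I would then check that the assignment $[A,\Phi] \mapsto (\text{descent of }(A,\Phi),\ c)$ is well-defined on gauge classes (different representatives produce cohomologous cocycles, hence the same descended Spin$^c$ structure and the same downstairs solution) and continuous in both directions. The inverse is immediate: pull back solutions for $\mathfrak{s}' + c$ to $G$-invariant solutions for $\pi^*(\mathfrak{s}'+c) = \mathfrak{s}$, placing them in the summand indexed by $c$. Assembling these maps gives the asserted homeomorphism $\mathfrak{M}^G_{M,\mathfrak{s}} \cong \coprod_{c \in \ker \pi^*} \mathfrak{M}_{M/G,\mathfrak{s}'+c}$.

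The main obstacle is the Spin$^c$ bookkeeping. One must verify that modifying the $G$-action by the cocycle $\{\gamma_g\}$ really produces a Spin$^c$ rather than merely a unitary $G$-equivariant structure, and that the resulting descended Spin$^c$ structures on $M/G$ are indexed exactly by $\ker \pi^*$. Concretely, this amounts to showing that the set of lifts of the $G$-action on $\mathfrak{s}$ (modulo equivariant gauge) is a torsor over $\mathrm{Hom}(G,S^1)$ and matching this torsor with the flat line bundles on $M/G$ killed by $\pi^*$ via covering space theory. This last identification is precisely why the second statement is only a homeomorphism rather than a canonical equality.
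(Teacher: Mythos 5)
The paper does not actually prove this theorem: it is quoted from Ruan--Wang \cite{RW} and Nakamura \cite{naka}, so there is no internal proof to compare against. Your sketch is a faithful reconstruction of the standard descent argument used in those references, and its two halves are organized correctly: (i) for the first identity, $G$-invariant configurations, perturbations, and gauge transformations on $M$ correspond bijectively to the unadorned data on $M/G$ under the covering $\pi$, so $\mathfrak{X}_{M,\mathfrak{s}}=\mathfrak{M}_{M/G,\mathfrak{s}'}$ (the injectivity of $\mathfrak{X}\to\mathfrak{M}$, i.e.\ that $\mathcal{G}$-equivalence of irreducible $G$-invariant solutions implies $\mathcal{G}^G$-equivalence, is exactly the paper's first Lemma in Section 2); (ii) for the second, the unique cocycle $\{\gamma_g\}$ attached to an irreducible $G$-fixed gauge class redefines the equivariant structure, and descent for free actions converts this into a Spin$^c$ structure downstairs differing from $\mathfrak{s}'$ by an element of $\ker\pi^*$. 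The one place where your bookkeeping is imprecise is the claim that the lifts of the $G$-action modulo equivariant gauge form a torsor over $\mathrm{Hom}(G,S^1)$: the correct parameter space is the nonabelian-looking but abelian group $H^1(G;\mathrm{Map}(M,S^1))$ (with $G$ acting by pullback), which can be strictly larger than $\mathrm{Hom}(G,S^1)$ when $H^1(M;\Bbb Z)\ne 0$. The cleaner route, which avoids computing this group at all, is to invoke descent as an equivalence of categories for free actions: isomorphism classes of equivariant structures on $\mathfrak{s}$ correspond exactly to Spin$^c$ structures on $M/G$ whose pullback is $\mathfrak{s}$, and that set is visibly $\{\mathfrak{s}'+c:\ c\in\ker\pi^*\}$. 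Since you explicitly flagged this identification as the remaining obstacle rather than asserting it, I would count your proposal as correct in outline, matching the cited sources' approach.
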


Finally we remark that the $G$-monopole invariant may change when
a homotopically different lift of given $G$-action on $M$ to its Spin$^c$
structure is chosen.

\section{equivariant Bauer-Furuta invariant}
For a more refined invariant to find a $G$-monopole class, one can also consider equivariant Bauer-Furuta invariant in the same way as the ordinary Bauer-Furuta invariant. 

Let $A_0\in \mathcal{A}(W_+)^G$. Just as $$Pic(M):=(A_0+i\ker d)/ \mathcal{G}_o$$ is a $b_1(M)$-dimensional torus, one gets the quotient
$$Pic^G(M):=(A_0+i\ker d)^G/ \mathcal{G}_o^G.$$
\begin{lem}\label{truelove}
$Pic^G(M)$ is diffeomorphic to a torus $T^{b_1(M)^G}$ of dimension
$b_1(M)^G:=\dim H^1(M;\Bbb R)^G$, and also covers a torus
$T^{b_1(M)^G}$ embedded in $Pic(M)$.
\end{lem}
\begin{proof}
Here we need the condition that $G$ is finite.
Let $A_0+\alpha\in (A_0+i\ker d)^G$.

If $\alpha\in \textrm{Im}\ d$, namely $\alpha=df$ for some $f\in Map(M,i\Bbb R)$, then $$\alpha=df=d(\frac{\sum_{h\in G}h^*f}{|G|}),$$ and hence $\alpha\in  d\ln \mathcal{G}_o^G.$

If $[\alpha]$ defines a nonzero element in $H^1(M;i\Bbb Z)$, then write  $\alpha=d\ln \mathfrak{g}$ for $\mathfrak{g}\in\mathcal{G}_o$, and $$|G|\alpha=\sum_{h\in G}h^*d\ln \mathfrak{g} =d\ln \prod_{h\in G}h^*\mathfrak{g}\in d\ln \mathcal{G}_o^G.$$

Let $\{[\alpha_1],\cdots,[\alpha_{b_1(M)}]\}$ be a generating set for $H^1(M;i\Bbb Z)\simeq \Bbb Z^{b_1(M)}$.
For $[\alpha_i]\in H^1(M;i\Bbb Z)^G$, let $n_i$ be the smallest
positive number such that $n_i\alpha_i\in  d\ln \mathcal{G}_o^G$.
In fact, $n_i$ must be an integer. Thus if $b_1(M)^G\ne 0$, then
$Pic^G(M)$ is the obvious $m$-fold covering of the subtorus
generated by  those $[\alpha_i]$'s in $H^1(M;i\Bbb Z)^G$, where $m$
is the product of those $n_i$'s. If $b_1(M)^G= 0$, then obviously
$Pic^G(M)$ is a point embedded in $Pic(M)$.
\end{proof}

Define infinite-dimensional Hilbert bundles $\mathcal{E}^G$ and $\mathcal{F}^G$ over $Pic^G(M)$ by
$$\mathcal{E}^G:=\tilde{\mathcal{E}}^G/\mathcal{G}_o^G,\ \ \ \ \textrm{and}\ \ \ \ \ \mathcal{F}^G:=\tilde{\mathcal{F}}^G/\mathcal{G}_o^G,$$ where
$$\tilde{\mathcal{E}}^G:= (A_0+i\ker d)^G\times(\Gamma(W_+)^G\oplus \Gamma(\Lambda^1M)^G\oplus H^0(M)^G)$$ and
$$\tilde{\mathcal{F}}^G:= (A_0+i\ker d)^G\times(\Gamma(W_-)^G\oplus \Gamma(\Lambda^2_+M)^G\oplus L_{m}^2(\Lambda^0M)^G\oplus H^1(M)^G)$$ are endowed with appropriate Sobolov norms and a nontrivial $\mathcal{G}_o^G$ action  on the connection part $(A_0+i\ker d)^G$ and the spinor parts.

The $G$-monopole map $\mu^G : \mathcal{E}^G \rightarrow \mathcal{F}^G$ is an $S^1$-equivariant continuous fiber-preserving map defined as
$$[A,\Phi,a,f]\mapsto [A,D_{A+ia}\Phi, F_{A+ia}^+-\Phi\otimes\Phi^*+\frac{|\Phi|^2}{2}\textrm{Id}, d^*a+f, a^{harm}],$$ which is fiberwisely the sum of a linear Fredholm operator denoted by $\mathfrak{L}^G$ and a (quadratic) compact operator. Note that $$(\mu^G)^{-1}(\textrm{zero section of }\mathcal{F}^G)/S^1$$ is exactly the $G$-monopole moduli space.  The important property that the inverse image of any bounded set in $\mathcal{F}^G$  is bounded follows directly from the corresponding boundedness property of the ordinary monopole map $\mu: \mathcal{E} \rightarrow \mathcal{F}$ with linear part $\mathfrak{L}$. (This notation $\mu$ should not be confused with the $\mu$ map when defining Seiberg-Witten invariants in previous sections.)

To express the $G$-monopole map as an $S^1$-equivariant stable cohomotopy class, we take  an $S^1$-equivariant trivialization $\mathcal{F}^G\simeq Pic^G(M)\times \mathcal{U}^G$ with the projection map $\pi : \mathcal{F}^G\rightarrow \mathcal{U}^G$, and  take finite-dimensional approximations of  $$\pi\circ \mu^G : \mathcal{E}^G\rightarrow \mathcal{U}^G.$$ The virtual index bundle $\textrm{ind}\ \mathfrak{L}^G$ over $Pic^G(M)$ is $$\ker (D)^G-\textrm{coker}(D)^G-\underline{H^2_+(M)^G}\in KO(Pic^G(M)),$$ where $D$ is the Spin$^c$ Dirac operator, and $\underline{H^2_+(M)^G}$ is the trivial bundle of rank $b_2^+(M)^G:=\dim H^2_+(M;\Bbb R)^G$.
Note that  $\textrm{ind}\ \mathfrak{L}^G$ can be represented as $$E-F\in KO(Pic^G(M)),$$ where $E:=(\mathfrak{L}^G)^{-1}(F)$, and $F:=Pic^G(M)\times V$ for a finite dimensional subspace $V\subset \mathcal{U}^G$.

With $TH$ denoting the Thom space of a vector bundle $H$, define an $S^1$-equivariant stable cohomotopy group
\begin{eqnarray}\label{bf2}
\pi^0_{S^1,\mathcal{U}}(Pic^G(M);\textrm{ind}\ \mathfrak{L}^G)
\end{eqnarray}
as
$$\textrm{colim}_{U\subset \mathcal{U}^G} [S^U\wedge TE,S^U\wedge TF]^{S^1},$$
where $U$ runs all finite dimensional real vector subspaces of $\mathcal{U}^G$ transversal to $V$, 
and  $S^U\wedge $  denotes the smash product with the one-point compactification of a vector space $U$.

Then our $G$-monopole map gives an element $\overline{BF}^G_{M,\frak{s}}$ in the above stable cohomotopy group.
When $G$ is the trivial group $\{1\}$, $\overline{BF}^{\{1\}}_{M,\frak{s}}$ is just equal to the ordinary Bauer-Furuta invariant $BF_{M,\frak{s}}$ in \cite{BF, bau}. Just as $BF_{M,\frak{s}}$, $\overline{BF}^G_{M,\frak{s}}$ can be also viewed as the $S^1$-equivariant homotopy class of  $\mu^G$ in the set of the $S^1$-equivariant continuous fiber-preserving maps which differ from $\mu^G$ by the fiberwise compact perturbations and have bounded inverse image for any bounded subset in $\mathcal{F}^G$. (See \cite{bauer}.)
An important fact for our purpose is the following :
\begin{thm}
If $\overline{BF}^G_{M,\frak{s}}$ is nontrivial, then $c_1(\frak{s})$ is a $G$-monopole class.
\end{thm}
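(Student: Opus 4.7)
The plan is to argue by contrapositive, adapting the Bauer-Furuta strategy to the $G$-equivariant setting. Suppose that $c_1(\frak{s})$ is not a $G$-monopole class. Then there exists a $G$-invariant Riemannian metric $g_0$ for which the Seiberg-Witten equations admit no $G$-invariant solution, so the associated $G$-monopole map $\tilde{H}_{g_0}:\mathcal{E}\to\mathcal{F}$ has empty fiberwise preimage of the zero section of $\mathcal{F}$. The goal is to deduce $BF^G_{M,\frak{s}}=0$, contradicting the hypothesis of nontriviality.

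First I would establish metric-invariance of $BF^G_{M,\frak{s}}$. Any two $G$-invariant metrics can be joined by a smooth 1-parameter family of $G$-invariant metrics, producing a continuous family of $G$-monopole maps whose assemblage is an $S^1$-equivariant fiber-preserving map $\mathcal{E}\times[0,1]\to\mathcal{F}\times[0,1]$ with inverse images of bounded sets that remain uniformly bounded in $t$. This follows from a parametrized version of the compactness argument underlying Theorem \ref{cpt}, and exhibits a homotopy through admissible maps. Consequently $BF^G_{M,\frak{s}}$ is independent of the $G$-invariant metric and may be computed from $g_0$.

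Second, by definition, $BF^G_{M,\frak{s}}$ is represented by an $S^1$-equivariant finite-dimensional approximation $\Psi: S^U\wedge T(\ker(D)^G)\to S^U\wedge S^{\varpi(\textrm{coker}(D)^G)\oplus H^2_+(M)^G}\wedge S^0$ of $\varpi\circ\tilde{H}_{g_0}$. Because $\tilde{H}_{g_0}$ has empty preimage of the zero section of $\mathcal{F}$ and pulls back bounded sets to bounded ones, one can choose $U$ large enough that $\Psi$ misses the non-basepoint origin of the target sphere along each fiber of $Pic^G(M)$. The straight-line retraction $(y,t)\mapsto y/(1-t)$ of the target minus its origin onto the basepoint at infinity is $S^1$-equivariant since the $S^1$-action on the target is linear, and it gives a fiberwise $S^1$-equivariant null-homotopy of $\Psi$. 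Hence $BF^G_{M,\frak{s}}=0$, the desired contradiction.

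The main technical obstacle will be the second step, namely showing that the emptiness of the zero set of $\tilde{H}_{g_0}$ survives finite-dimensional approximation. This demands a uniform a priori bound localizing any hypothetical $G$-invariant Seiberg-Witten configuration to a fixed bounded subset of $\mathcal{E}$, after which the finite-dimensional model agrees with $\tilde{H}_{g_0}$ closely enough on that set to inherit its empty zero locus. The $G$-equivariant gauge-fixing lemma together with the compactness argument of Theorem \ref{cpt} supplies these bounds in the $G$-invariant category; with them in hand, the rest of the argument is a routine $G$-equivariant replay of the classical Bauer-Furuta proof.
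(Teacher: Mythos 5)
Your argument is correct and is essentially the paper's own approach: the paper simply cites \cite[Proposition 6]{IL} and notes that the Ishida--LeBrun argument for the ordinary Bauer--Furuta invariant carries over verbatim, and that argument is exactly your contrapositive — metric-independence of $BF^G_{M,\frak{s}}$ via a path of $G$-invariant metrics, followed by the observation that a zero-free monopole map with bounded preimages of bounded sets yields null-homotopic finite-dimensional approximations. The technical point you flag (that emptiness of the zero locus survives finite-dimensional approximation, via the a priori bounds from equivariant gauge fixing and compactness) is precisely the ``facts from functional analysis'' the paper alludes to.
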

\begin{proof}
This is a consequence of facts from functional analysis, and one can take the proof in \cite[Proposition 6]{IL} verbatim, which proves that $c_1(\frak{s})$ is a monopole class, if ordinary Bauer-Furuta invariant $BF_{M,\frak{s}}\ne 0$.
\end{proof}

The $G$-equivariant Bauer-Furuta invariant $BF^G_{M,\frak{s}}$
first introduced (in case of $b_1(M)=0$) by M. Szymik
\cite{szymik} is a little different. (See also \cite{naka1}.) The
ordinary monopole map $\mu : \mathcal{E} \rightarrow \mathcal{F}$
is $(G\times S^1)$-equivariant, and one takes its class in the
$(G\times S^1)$-equivariant stable homotopy group
\begin{eqnarray}\label{bf1}
\pi^0_{G\times S^1,\mathcal{U}}(Pic(M);\textrm{ind}\ \mathfrak{L})
\end{eqnarray}
to get $BF^G_{M,\frak{s}}$. There is the obvious forgetful map from (\ref{bf1}) to
\begin{eqnarray*}
\pi^0_{S^1,\mathcal{U}}(Pic(M);\textrm{ind}\ \mathfrak{L}),
\end{eqnarray*}
under which $BF^G_{M,\frak{s}}$ gets mapped to $BF_{M,\frak{s}}$.

\begin{lem}\label{saveyou}
If the fixed point set $M^G$ is nonempty  or $b_1(M)^G=0$, then
the obvious quotient maps $$p_1 : \mathcal{E}^G\rightarrow
\tilde{\mathcal{E}}^G/\mathcal{G}_o\ \ \ \ \textrm{and}\ \ \ \ p_2
: \mathcal{F}^G\rightarrow \tilde{\mathcal{F}}^G/\mathcal{G}_o$$
are bijective, and $Pic^G(M)$ is a submanifold of $Pic(M)$.
\end{lem}
\begin{proof}
Since $\mathcal{G}_o^G$ is a subgroup of $\mathcal{G}_o$, $p_1$
and $p_2$ are obviously surjective.

To show that $p_1$ is injective, suppose that
$[A_1,\Phi_1,a_1,f_1]$ and $[A_2,\Phi_2,a_2,f_2]$ in
$\mathcal{E}^G$ are equivalent under $\gamma\in\mathcal{G}_o$.
Then $$A_1=A_2-2d\ln \gamma, \ \ \ \textrm{and}\ \ \
\Phi_1=\gamma\Phi_2.$$ By the first equality, $d\ln \gamma$ is
$G$-invariant.

Let's first consider the case when $M^G\ne \emptyset$.
Let $S$ be the subset of $M$ where $\gamma$ is $G$-invariant. By the continuity of $\gamma$, $S$ must be a closed subset. Since $S$ contains $M^G\ne \emptyset$, $S$ is nonempty.
It suffices to show that ${S}$ is open. Let $x_0\in {S}$. Then we have that for any $g\in G$, $$g^*\ln\gamma(x_0)=\ln\gamma(x_0), \ \ \ \textrm{and}\ \ \ g^*d\ln\gamma=d\ln\gamma,$$ which implies that $g^*\ln\gamma=\ln\gamma$ on an open neighborhood of $x_0$ on which $g^*\ln\gamma$ and $\ln\gamma$ are well-defined. By the compactness of $G$, there exists an open neighborhood of $x_0$ on which $g^*\ln\gamma$ is well-defined for all $g\in G$, and $\ln \gamma$ is $G$-invariant. This proves the openness of $S$.

In case when  $b_1(M)^G=0$, a $G$-invariant closed 1-form $d\ln
\gamma$ can be written as $df$ for $f\in Map(M,i\Bbb R)$. Again
using the compactness of $G$, $df=d(\frac{\sum_{h\in
G}h^*f}{|G|})$, and so $\gamma\in\mathcal{G}_o^G$.

In the same way, one can show that $p_2$ is injective.

Now it follows that $Pic^G(M)$ becomes a submanifold of $Pic(M)$.
Namely, the $m$ in Lemma \ref{truelove} is 1.
\end{proof}

Thus if $M^G\ne \emptyset$ or $b_1(M)^G=0$, then $\mathcal{E}^G$
and $\mathcal{F}^G$ are subsets of $\mathcal{E}$ and $\mathcal{F}$
respectively so that we can think of the restriction of $\mu$ to
$\mathcal{E}^G$, which is equal to $\mu^G$. Letting $\rho$ be the
map from (\ref{bf1}) to (\ref{bf2}) induced by restricting to its
$G$-fixed point set,  we have :
\begin{thm}
If $M^G\ne \emptyset$  or $b_1(M)^G=0$, then
$$\rho(BF^G_{M,\frak{s}})=\overline{BF}^G_{M,\frak{s}}.$$
\end{thm}
As observed in \cite{szymik}, $\rho$ is not injective in general.
But whether $Pic^G(M)$ is a submanifold of $Pic(M)$ or not, we can
conclude the following important fact :
\begin{thm}
If  $\overline{BF}^G_{M,\frak{s}}$ is not zero, then so is
$BF^G_{M,\frak{s}}$.
\end{thm}
\begin{proof}
Assume  $BF^G_{M,\frak{s}}$ is zero. Let $pr$ be the covering map
from $Pic^G(M)$ onto a torus $T^{b_1(M)^G}\subset Pic(M)$ as shown
in Lemma \ref{truelove}, and
$$\sigma : \pi^0_{G\times S^1}(Pic(M);\textrm{ind}\
\mathfrak{L})\rightarrow \pi^0_{G\times
S^1}(pr(Pic^G(M));\textrm{ind}\ \mathfrak{L})$$  be the
restriction map to the fibers over $pr(Pic^G(M))$. Then
$\sigma(BF^G_{M,\frak{s}})$ is also zero.

Further restricting $\sigma(BF^G_{M,\frak{s}})$ to the $G$-fixed
point set in each fiber over $Pic^G(M)$, we get an element
$\rho(\sigma(BF^G_{M,\frak{s}}))$ in
$\pi^0_{S^1}(pr(Pic^G(M));\textrm{ind}\ \mathfrak{L}^G)$. (By
abuse of notation, we still denote this restriction map by
$\rho$.) Thus we have $\rho(\sigma(BF^G_{M,\frak{s}}))=0$.

The covering map $pr$ induces a lifting map from
$\pi^0_{S^1}(pr(Pic^G(M));\textrm{ind}\ \mathfrak{L}^G)$ to
$\pi^0_{S^1}(Pic^G(M);\textrm{ind}\ \mathfrak{L}^G)$ in an obvious
way, and it maps $\rho(\sigma(BF^G_{M,\frak{s}}))$ to
$\overline{BF}^G_{M,\frak{s}}$. Thus finally we get the vanishing
of $\overline{BF}^G_{M,\frak{s}}$, yielding a contradiction.
\end{proof}

When the $G$-action is free, $\overline{BF}^G_{M,\frak{s}}$ is equal to $BF_{M/G,\frak{s}'}$, where $\frak{s}'$ is the Spin$^c$ structure on $M/G$ induced from $\frak{s}$ and its $G$-action. Under the further assumption that $|G|$ is prime, and the dimension of Seiberg-Witten moduli space is zero, the equivariant Bauer-Furuta invariant may be expressed as $BF_{M,\frak{s}}$ and $BF_{M/G,\frak{s}''}$ for all $\frak{s}''$ lifting to $\frak{s}$. (See \cite{szymik}.) In general, it is difficult to compute $BF^G_{M,\frak{s}}$ as well as $BF_{M,\frak{s}}$ itself. Therefore it is quite worthwhile  to compute $\overline{BF}^G_{M,\frak{s}}$ when the $G$-action is not free.


\section{Connected sums and $\Bbb Z_k$-monopole invariant}

For $(\bar{M}_k,\bar{\frak{s}})$ described in Theorem
\ref{firstth}, there is at least one obvious $\Bbb Z_k$-action on
$\bar{\frak{s}}$ coming from the given $\Bbb Z_k$-action on
$\frak{s}_N$ and the $\Bbb Z_k$-equivariant gluing of $k$-copies
of $\frak{s}$. We will call such an action ``canonical" and denote
its generator by $\tau$. Any other lifting of the $\Bbb
Z_k$-action on $\bar{M}_k$ is given by a group generated by
$\frak{g}\circ \tau$ where $\frak{g}$ is a gauge transformation of
$\bar{\frak{s}}$, i.e. the multiplication by a smooth $S^1$-valued
function on $\bar{M}_k$.

In general, there may be homotopically inequivalent liftings of the $\Bbb Z_k$-action on $\bar{M}_k$, but we have
\begin{lem}\label{triv}
Any lifted action on $\bar{\frak{s}}$ can be homotoped to another lifting which is equal to a canonical lifting on the cylindrical gluing regions.
\end{lem}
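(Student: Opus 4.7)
The plan is to write the given lift as $\phi_0 = \frak g_0\cdot\tau$ where $\frak g_0 : \bar M_k \to S^1$ satisfies the cocycle identity $\prod_{j=0}^{k-1}\tau^{j*}\frak g_0 \equiv 1$ (the condition $(\frak g_0\tau)^k = \mathrm{id}$), and to build an explicit path $\frak g_t$ of such cocycles from $\frak g_0$ to a gauge transformation $\frak g_1$ equal to $1$ on an inner part of each gluing neck. The corresponding path $\phi_t = \frak g_t\cdot\tau$ will then be a homotopy of $\Bbb Z_k$-lifts, with $\phi_1$ agreeing with the canonical lift $\tau$ on the gluing region.

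The key geometric input is that the $k$ cylindrical gluing necks $C_1,\dots,C_k\subset\bar M_k$ are pairwise disjoint, each diffeomorphic to $S^3\times[-1,1]$ (hence simply connected), and are cyclically permuted by $\tau$. Thus $\frak g_0|_{C_i}$ admits a smooth real logarithm $f_i$ with $\frak g_0|_{C_i} = e^{if_i}$. Evaluating the cocycle identity on the $\Bbb Z_k$-orbit of $p\in C_1$ gives $\sum_{j=0}^{k-1} f_{j+1}(\tau^j p) \in 2\pi\Bbb Z$, and by connectedness of $C_1$ this is a constant $2\pi n$; replacing $f_1$ by $f_1 - 2\pi n$ (which does not alter $e^{if_1}$) arranges $\sum_j f_{j+1}(\tau^j p) = 0$ on all of $C_1$. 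Next I pick a $\Bbb Z_k$-invariant smooth cutoff $\chi:\bar M_k\to[0,1]$ supported in $\cup_i C_i$ and equal to $1$ on a thinner $\Bbb Z_k$-equivariant subfamily of inner necks $C_i'\subset C_i$, and define $F:\bar M_k\to\Bbb R$ by $F = \chi\cdot f_i$ on $C_i$ and $F\equiv 0$ elsewhere; this is smooth since $\chi$ vanishes near $\partial C_i$.

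Setting $\frak g_t = \frak g_0\cdot e^{-itF}$, the invariance $\tau^*\chi = \chi$ together with the adjustment above gives $\sum_j F(\tau^j p) = \chi(p)\sum_j f_{j+1}(\tau^j p) \equiv 0$ on $\bar M_k$, so $\prod_j \tau^{j*}\frak g_t = \bigl(\prod_j \tau^{j*}\frak g_0\bigr)\cdot e^{-it\sum_j F(\tau^j\cdot)} \equiv 1$ for every $t$, confirming that each $\phi_t$ is a genuine $\Bbb Z_k$-lift. At $t=1$ one has $\frak g_1|_{C_i'} = e^{if_i}e^{-if_i} = 1$, so $\phi_1$ agrees with $\tau$ on the inner cylinders as required. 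The main subtle point is maintaining the cocycle condition at every intermediate $t$: a naive, independent log-trivialization on each $C_i$ would in general leave a nonzero integer $n$ in the orbit sum, and $\prod_j \tau^{j*}\frak g_t$ would drift through unwanted $k$-th roots of unity. This obstruction is circumvented precisely by the $2\pi\Bbb Z$-adjustment of the $f_i$ and the $\Bbb Z_k$-equivariance of $\chi$; any fixed points of the $\Bbb Z_k$-action on $N$ lie outside $\cup_i C_i$, where $F\equiv 0$, so $\frak g_t$ is constant in $t$ there and the constraint $\frak g_t(p)^k = 1$ at such points is automatic.
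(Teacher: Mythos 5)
Your proof is correct and follows essentially the same route as the paper's: both exploit the simple connectivity of the necks ($H^1(S^3\times[0,1];\Bbb Z)=0$) to trivialize the transition functions of the lift there while preserving the constraint that the product over a $\Bbb Z_k$-orbit equals $1$. The only difference is bookkeeping --- the paper homotopes each $e^{i\sigma_j}$ separately rel boundary and then corrects the last homotopy by multiplying with $\overline{\prod_j H_j}$, whereas you normalize one logarithm by $2\pi n$ and use a single $\Bbb Z_k$-invariant cutoff, which yields a cleaner explicit formula for the same homotopy.
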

\begin{proof}
Let $\sigma$ be a generator of a $\Bbb Z_k$ action on $\bar{\frak{s}}$ covering the $\Bbb Z_k$-action on $\bar{M}_k$.
For the trivialization of the Spin$^c$ structure on the $k$ cylindrical regions $\cup_{j=1}^k U_j$ where each $U_j$ is diffeomorphic to $S^3\times [0,1]$ such that the generator $\tau$ of a canonical action acts as the multiplication by 1 there, $$\sigma|_{U_j} : \bar{\frak{s}}|_{U_j}\rightarrow \bar{\frak{s}}|_{U_{j+1}}$$ covering the identity map from $U_j$ to $U_{j+1}$ is given by the multiplication by $$e^{i\sigma_j}\in C^\infty(S^3\times [0,1],S^1).$$ Since $\sigma^k=Id$, $$\sum_{j=1}^k\sigma_j\equiv 0\ \ \ \textrm{mod}\ 2\pi.$$

By the fact that $H^1(S^3\times [0,1];\Bbb Z)=0$, any two gauge transformations on $S^3\times [0,1]$ can be homotoped to each other, and hence one can easily see that there exists a smooth relative  homotopy $$H_j(x,t): (S^3\times [0,1])\times [0,1]\rightarrow S^1$$ such that $$H_j(\cdot,0)=e^{i\sigma_j(\cdot)},$$  $$H_j(x,\cdot)=e^{i\sigma_j(x)}\ \ \textrm{for all } x\in S^3\times \{0,1\},$$ $$H_j(\cdot,1)|_{S^3\times [\frac{1}{3},\frac{2}{3}]\times \{1\}}=1.$$ But $\prod_{j=1}^kH_j$ may not be 1 to fail to form a group $\Bbb Z_k$. To remedy this, modify only $H_k$ by $$\tilde{H}_k:=H_k\cdot\overline{\prod_{j=1}^kH_j}$$ which is also a smooth relative homotopy satisfying the above three properties and produces $$(\prod_{j=1}^{k-1}H_j)\tilde{H}_k=1.$$
Therefore we have obtained a homotopy of the initial action to the action which is trivial on each $S^3\times [\frac{1}{3},\frac{2}{3}]$, while keeping equal to the initial action outside $\cup_{j=1}^k U_j$.
\end{proof}

By our assumption of $b_2^+(\bar{M}_k)^{\Bbb Z_k}=b_2^+(M)>1$, this homotopy of the action does not change the $\Bbb Z_k$-monopole invariant.
From now on we always assume that the $\Bbb Z_k$-action
on $\bar{\frak{s}}$ is equal to a canonical action on the
cylindrical gluing regions, and take a trivialization of the
Spin$^c$ structure of $k$ cylindrical gluing regions so that a
canonical action $\tau$ acts as the identity there.

The following theorem gives a proof for the $G$-monopole invariant of $(\bar{M}_k,\bar{\frak{s}})$ in Theorem \ref{firstth}.
Before stating the theorem, note that $$\textrm{rank}(H_1(N;\Bbb Z)^{\Bbb Z_k})=\dim H_1(N;\Bbb R)^{\Bbb Z_k},$$ simply because $\Bbb Z_k$ also acts on $H_1(N;\Bbb Z)$.
\begin{thm}\label{myLord}
Let $(\bar{M}_k,\bar{\frak{s}})$ be as in Theorem \ref{firstth} and $d\geq 0$ be an integer.

If $\nu:=\dim H_1(N;\Bbb R)^{\Bbb Z_k}= 0$, then for $A=1$ or
$a_1\wedge\cdots\wedge a_{j}$
$$SW^{\Bbb Z_k}_{\bar{M}_k,\bar{\frak{s}}}(U^d A)\equiv
SW_{M,\frak{s}}(U^d A)\ \ \ mod \ 2,$$ where $U$ denotes the
positive generator of the zeroth homology of $\bar{M}_k$ or $M$,
and each $a_i\in H_1(M;\Bbb Z)/\textrm{torsion}$ also denotes any
of $k$ corresponding elements in $H_1(\bar{M}_k;\Bbb Z)$ by abuse
of notation.

If $\nu\ne 0$, then
$$SW^{\Bbb Z_k}_{\bar{M}_k,\bar{\frak{s}}}(U^d A\wedge b_1\wedge\cdots\wedge b_\nu)\equiv
SW_{M,\frak{s}}(U^d A)\ \ \ mod \ 2,$$ where $A$ is as above, and
$b_1,\cdots, b_\nu\in H_1(N;\Bbb Z)$ is a basis of $H_1(N;\Bbb R)^{\Bbb
Z_k}$.
\end{thm}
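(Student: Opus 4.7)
The plan is a $\mathbb{Z}_k$-equivariant neck-stretching and connect-sum gluing argument that exhibits, for sufficiently long necks, an orientation-preserving diffeomorphism
\[
\mathfrak{X}_T \;\cong\; \mathfrak{M}_{M,\frak{s}}\times T^{\nu},
\]
where $T^\nu$ is the torus of $\mathbb{Z}_k$-invariant flat unitary connections on $\det\bar{\frak{s}}|_N$ modulo $\mathbb{Z}_k$-invariant gauge (a point when $\nu=0$). First I would present $\bar{M}_k$ as the union of $N$ with $k$ copies of $M$ glued along $k$ long cylindrical necks $U_j\cong S^3\times[-T,T]$ whose midspheres form a free $\mathbb{Z}_k$-orbit in $N$; equip $N$ with the given $\mathbb{Z}_k$-invariant PSC metric and the $k$ summands of $M$ with a $\mathbb{Z}_k$-equivariant metric glued across the necks, and choose a $\mathbb{Z}_k$-invariant generic small perturbation so that $\mathfrak{X}_T$ is smooth and compact by Theorem \ref{cpt}.

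Next I analyze the $T\to\infty$ limit. Given $(A_T,\Phi_T)\in\mathfrak{X}_T$, equivariant gauge fixing and passage to a subsequence yield smooth convergence on compact subsets of each piece. The PSC metric on $N$ together with the Weitzenb\"ock identity forces $\Phi_T|_N\to 0$ in $L^2$, so the $N$-limit is a flat reducible, necessarily $\mathbb{Z}_k$-invariant; the minimality $c_1^2(\frak{s}_N)=-b_2(N)$ together with $b_2^+(N)=0$ rules out bubbling or irreducible contributions on $N$ and pins the reducible component to a $\mathbb{Z}_k$-invariant translate of the Picard torus. On each copy of $M$ the limit is an irreducible solution for $(M,\frak{s})$, and $\mathbb{Z}_k$-invariance forces the $k$ such limits to agree under the cyclic identification, so the limit is parameterized by a single point of $\mathfrak{M}_{M,\frak{s}}\times T^\nu$.

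In the converse direction, given $[A,\Phi]\in\mathfrak{M}_{M,\frak{s}}$ and a $\mathbb{Z}_k$-invariant flat reducible on $N$, Taubes' connect-sum gluing produces a unique nearby solution on $(\bar{M}_k,g_T)$ for large $T$. Since $H^1(S^3)=0$ there are no gluing parameters at each neck, and the construction is made $\mathbb{Z}_k$-equivariant either by running the Kuranishi model on the $\mathbb{Z}_k$-invariant Sobolev completions or by averaging and invoking uniqueness of the corrected solution; this closes the diffeomorphism asserted above. Under this identification the base-point class $\mu_{\bar{M}_k,\bar{\frak{s}}}(U)$ pulls back from the $\mathfrak{M}_{M,\frak{s}}$-factor to $\mu_{M,\frak{s}}(U)$ (placing the base point $o$ in one $M$-summand), each $a_i\in H_1(M;\mathbb{Z})/\mathrm{tors}$ (viewed as a loop in a single summand) similarly pulls back to $\mu_{M,\frak{s}}(a_i)$, and the classes $b_j$ dual to $H_1(N;\mathbb{R})^{\mathbb{Z}_k}$ restrict via the holonomy map $Hol$ to standard angular 1-forms on $T^\nu$, so $b_1\wedge\cdots\wedge b_\nu$ evaluates to $\pm 1$ on $[T^\nu]$.

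Pairing with $[\mathfrak{X}_T]$ then gives $\pm SW_{M,\frak{s}}(U^d a)$ in both regimes, and reducing modulo $2$ kills the sign to yield the claimed congruences. The main obstacle will be the equivariant Kuranishi/gluing analysis of the previous paragraph, specifically the verification that $\mathbb{Z}_k$-equivariant gluing recovers all components of $\mathfrak{X}_T$ and produces no extra ones from solutions concentrating on the necks or on $N$ — this is precisely where the minimality $c_1^2(\frak{s}_N)=-b_2(N)$ and the PSC hypothesis enter crucially. Sign and orientation ambiguities that would be delicate in an integer-valued statement are bypassed by the mod $2$ reduction.
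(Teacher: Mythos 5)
Your route is genuinely different from the paper's, and it is the one the paper explicitly mentions (in the remark after the proof) as the alternative it chose \emph{not} to take: you propose to glue the $G$-monopole moduli spaces directly, via an equivariant neck-stretching/Kuranishi argument, to get $\mathfrak{X}\cong\mathfrak{M}_{M,\mathfrak{s}}\times T^{\nu}$ in one stroke. The paper instead glues the \emph{ordinary} moduli spaces: it builds the approximate moduli space $\tilde{\mathfrak{M}}_{\bar{M}_k}$ out of $(k+1)$-tuples $[(\tilde{\xi}_1(\theta_1),\dots,\tilde{\xi}_k(0),\tilde{\eta})]$, makes the gluing diffeomorphism $\Upsilon$ equivariant by averaging a normal bundle, computes that the fixed locus $\mathfrak{M}_{\bar{M}_k}^{\mathbb{Z}_k}$ consists of $k$ copies of $\mathfrak{M}_M\times T^{\nu}$ indexed by phases $\theta=2\pi j/k$ satisfying $\sigma^*\Xi_\theta=e^{i\theta}\cdot\Xi_\theta$, and then proves a sublemma (using a point with nontrivial stabilizer) showing that only the $\theta=0$ copy admits genuinely invariant representatives and hence lies in $\mathfrak{X}$. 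Your approach, if completed, buys a cleaner statement (no free/non-free dichotomy, no need to enumerate the spurious $k-1$ fixed components); the paper's approach buys the ability to lean entirely on standard non-equivariant gluing theory, reducing all the equivariant content to finite-dimensional phase bookkeeping plus a short gauge-theoretic argument.

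The gap is the step you yourself flag as ``the main obstacle'': you assert but do not prove that the equivariant gluing map is a bijection onto $\mathfrak{X}_T$, i.e.\ that every genuinely invariant solution is recovered and no extra components appear. This is not a routine citation: the equivariant correction of an approximate solution requires either an invariant right inverse of the linearization on the invariant Sobolev completions or an averaging argument compatible with the uniqueness clause of the implicit function theorem, and the surjectivity requires an equivariant refinement of the ``every solution is near a grafted one'' statement. The paper's $k$-component computation is a warning that the naive count is delicate --- the fixed locus of the ordinary moduli space is $k$ times bigger than $\mathfrak{X}$, so any argument that conflates ``gauge class fixed by $\sigma$'' with ``admits an invariant representative'' will overcount by a factor of $k$ and destroy the mod $2$ conclusion for $k$ even. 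Your limit analysis does start from genuinely invariant configurations, which in principle sidesteps this, but that is exactly the part left unexecuted. Two smaller omissions: you implicitly assume the lift of the $\mathbb{Z}_k$-action to $\bar{\mathfrak{s}}$ is the canonical one on the necks, whereas the theorem is asserted for an arbitrary lift --- the paper needs Lemma~\ref{triv} to homotope a general lift to one that is trivial on the cylindrical regions before any of the gluing identifications make sense; and the identification of $T^{\nu}$ should be with $H^1_{cpt}(\hat{N};\mathbb{R})^{\mathbb{Z}_k}$ modulo the invariant integral lattice, which is where the hypothesis on $H_1(N;\mathbb{R})^{\mathbb{Z}_k}$ and the choice of the classes $b_1,\dots,b_\nu$ enter.
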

\begin{proof}
First we consider the case when the action on $N$ has a fixed point.

We take a $\Bbb Z_k$-invariant metric of positive scalar curvature on $N$. In order to do the connected sum with $k$ copies of $M$, we perform a Gromov-Lawson type surgery \cite{GL,sung1} around each  point of a free orbit of $\Bbb Z_k$ keeping the positivity of scalar curvature to get a Riemannian manifold $\hat{N}$ with cylindrical ends with each end isometric to a Riemannian product of a round $S^3$ and $\Bbb R$. We suppose that this is done in a symmetric way so that the $\Bbb Z_k$-action on $\hat{N}$ is isometric.

On $M$ part, we put any metric and perform a Gromov-Lawson surgery with the same cylindrical end as above. Let's denote this by $\hat{M}$. Now chop the cylinders at sufficiently large length and then glue $\hat{N}$ and $k$-copies of $\hat{M}$ along the boundary to get a desired  $\Bbb Z_k$-invariant metric $g_k$ on $\bar{M}_k$. Sometimes we mean $(\bar{M}_k,g_k)$ by $\bar{M}_k$.

Let's first figure out the ordinary moduli space $\frak{M}_{\bar{M}_k}$ of
$(\bar{M}_k,\bar{\mathfrak{s}})$. Let $\frak{M}_{\hat{M}}$ and
$\frak{M}_{\hat{N}}$ be the moduli spaces of finite-energy
solutions of Seiberg-Witten equations on $(\hat{M},\mathfrak{s})$ and
$(\hat{N},\mathfrak{s}_N)$ respectively. From now on, $[\ \cdot\ ]$ of a configuration $\cdot$ denotes its gauge equivalence class.

By the gluing theory\footnote{For more details, one may consult
\cite{KM, nicol, safari, vid1, sung2}.} of Seiberg-Witten moduli
space, which is now a standard method in gauge theory, $\frak{M}_{M}$
is diffeomorphic to $\frak{M}_{\hat{M}}$. In
$\frak{M}_{\hat{M}}$, we use a compact-supported self-dual 2-form
for a generic perturbation.

Since $\hat{N}$ has a metric of positive scalar curvature and the property that $b_2^+(\hat{N})=0$ and $c_1^2(\frak{s}_{\hat{N}})=-b_2(\hat{N})$,  $\hat{N}$ also has no gluing obstruction even without perturbation so that $\frak{M}_N$ is diffeomorphic to
$\frak{M}_{\hat{N}}=\frak{M}_{\hat{N}}^{red},$  which can be identified with the sspace of $L^2$-harmonic 1-forms on $\hat{N}$ modulo gauge, i.e. $$H^1_{cpt}(\hat{N},\Bbb R)/H^1_{cpt}(\hat{N},\Bbb Z)\simeq T^{b_1(N)}.$$ (Here by $T^0$ we mean a point, and $\frak{M}^{red}\subset \frak{M}$ denotes the moduli space of reducible solutions.)

As is well-known, approximate solutions on $\bar{M}_k$ are
obtained by chopping-off solutions on each $\hat{M}$ and $\hat{N}$
at a sufficiently large cylindrical length and then grafting them
to $\bar{M}_k$ via a sufficiently slowly-varying partition of
unity in a $\Bbb Z_k$-invariant way.

In taking cut-offs of solutions on $\hat{N}$, we use a special gauge-fixing condition. Fix a $\Bbb Z_k$-invariant connection $\eta_0$ such that $[\eta_0]\in\frak{M}_{\hat{N}}$,
which exists by taking the $\Bbb Z_k$-average of any reducible
solution, and take compact-supported closed 1-forms
$\beta_1,\cdots,\beta_{b_1(N)}$ which generate $H^1_{cpt}(\hat{N};\Bbb Z)$ and vanish on the cylindrical gluing regions. Any element $[\eta]\in\frak{M}_{\hat{N}}$ can be
expressed as $$\eta=\eta_0+\sum_{i}c_i\beta_i$$ for $c_i\in \Bbb
R/ \Bbb Z$, and the gauge equivalence class of its cut-off
$$\tilde{\eta}:=\rho\eta=\rho\eta_0+\sum_{i}c_i\beta_i$$ using a
$\Bbb Z_k$-invariant cut-off function $\rho$ which is equal to 1
on the support of every $\beta_i$ is well-defined independently of
the mod $\Bbb Z$ ambiguity of each $c_i$.

Similarly, for the cut-off procedure to be well-defined independently of the choice of a gauge representative on $\frak{M}_{\hat{M}}$, one needs to take a gauge-fixing so that homotopy classes of gauge transformations on $\hat{M}$ are parametrized  by $H^1_{cpt}(\hat{M},\Bbb Z)$, whose elements are gauge transformations constant on gluing regions.
Thus the gluing produces a smooth map from $$(\prod_{i=1}^k\frak{M}_{\hat{M}})\times \frak{M}_{\hat{N}}:=(\underbrace{\frak{M}_{\hat{M}}\times \cdots \times
\frak{M}_{\hat{M}}}_k)\times \frak{M}_{\hat{N}}$$
to a so-called approximate moduli space
$\tilde{\frak{M}}_{\bar{M}_k}$ in $\frak{B}^*_{\bar{M}_k}$. This gluing map is one to one, because of the unique continuation principle (\cite{KM}) of Seiberg-Witten equations. From the
unobstructedness of gluing, $\tilde{\frak{M}}_{\bar{M}_k}\subset
\frak{B}^*_{\bar{M}_k}$ is a smoothly embedded submanifold diffeomorphic to
\begin{eqnarray*}
((\prod_{i=1}^k\frak{M}_{\hat{M}}^o)/S^1)\times\frak{M}_{\hat{N}} &=&
((\prod_{i=1}^k\frak{M}_{\hat{M}})\tilde{\times} T^{k-1})\times T^{b_1(N)},
\end{eqnarray*}
where $\frak{M}_{\hat{M}}^o$ is the based moduli space fibering
over $\frak{M}_{\hat{M}}$ with fiber $\mathcal G_o/ \mathcal
G=S^1$, and $\tilde{\times}$ means a $T^{k-1}$-bundle over
$\prod_{i=1}^k\frak{M}_{\hat{M}}$.

As the length of the cylinders in $\bar{M}_k$ increases,
approximate solutions get close to genuine solutions
exponentially. Once we choose smoothly-varying normal subspaces to
tangent spaces of $\tilde{\frak{M}}_{\bar{M}_k}\subset
\frak{B}^*_{\bar{M}_k}$, the Newton method gives a diffeomorphism
$$\Upsilon : \tilde{\frak{M}}_{\bar{M}_k}\rightarrow
\frak{M}_{\bar{M}_k}$$ given by a very small isotopy along the
normal directions. A bit more explanation will be given in Lemma
\ref{saveme}.

An important fact for us is that the same $k$ copies of a compactly supported self-dual 2-form  can be used for the perturbation on $M$ parts, while no perturbation is put on the $N$ part. Along with the $\Bbb Z_k$-invariance of the Riemannian metric $g_k$, the perturbed Seiberg-Witten equations for $(\bar{M}_k,g_k)$ are $\Bbb Z_k$-equivariant so that the induced smooth $\Bbb Z_k$-action on $\mathcal{B}^*_{\bar{M}_k}$ maps $\frak{M}_{\bar{M}_k}$ to itself.

Let's describe elements of $\tilde{\frak{M}}_{\bar{M}_k}$ for
$(\bar{M}_k,g_k)$ more explicitly.  For $[\xi]\in
\frak{M}_{\hat{M}}$, let $\tilde{\xi}$ be an approximate solution
for $\xi$ cut-off at a large cylindrical length, and
$\tilde{\xi}(\theta)$ be its gauge-transform under the gauge
transformation by $e^{i\theta}\in C^\infty(\hat{M},S^1)$. (From
now on, the tilde $\tilde{\ }$ of a solution will mean its
cut-off.)  Any element in $\tilde{\frak{M}}_{\bar{M}_k}$ can be
written as an ordered $(k+1)$-tuple
$$[(\tilde{\xi}_1(\theta_1),\cdots , \tilde{\xi}_k(\theta_k),
\tilde{\eta})]$$ for each $[\xi_i]\in \frak{M}_{\hat{M}}$ and
constants $\theta_i$'s, where the $i$-th term for $i=1,\cdots , k$
represents the approximate solution grafted on the
$i$-th $M$ summand, and the last term is a cut-off of $\eta\in
\frak{M}_{\hat{N}}^{red}$. Here, the
grafting over each $M$ part is done via the identification of the
Spin$^c$ structure of each $M$ part using a canonical action, and hence
a canonical action $\tau$ on it is given by the permutation of
approximate solutions on $M$ parts, i.e.
$$\tau^*(\tilde{\xi}_1(\theta_1),\cdots,\tilde{\xi}_k(\theta_k),
\tilde{\eta})=(\tilde{\xi}_k(\theta_k),\tilde{\xi}_1(\theta_1),\cdots
, \tilde{\xi}_{k-1}(\theta_{k-1}), \tau^*\tilde{\eta}).$$ The quotient of these
$(k+1)$-tuples by $S^1$ gives precisely $\tilde{\frak{M}}_{\bar{M}_k}$ so that there is a bijective
correspondence
 \begin{eqnarray}\label{general}
\tilde{\frak{M}}_{\bar{M}_k}
\end{eqnarray}
\begin{eqnarray*}
\wr|
\end{eqnarray*}
$$\{ [(\tilde{\xi_1}(\theta_1),\cdots , \tilde{\xi}_{k-1}(\theta_{k-1}),\tilde{\xi_k}(0), \tilde{\eta})]\ |\ [\eta]\in \frak{M}_{\hat{N}},   [\xi_i]\in \frak{M}_{\hat{M}}, \theta_i\in [0,2\pi)\ \forall i
\}.
$$
Let's denote a generator of the $\Bbb Z_k$-action $(\bar{M}_k,\bar{\frak{s}})$ by $\sigma$. By Lemma \ref{triv}, $\sigma$ is equal to 1 on the cylindrical gluing regions, and hence $\sigma$ can be obviously extended to an action on the Spin$^c$ structure of $\hat{N}\cup \amalg_{i=1}^k\hat{M}$ and also its moduli space of finite-energy monopoles.
By the $\Bbb Z_k$-invariance of $\rho$ $$\sigma^*\tilde{\eta}=\sigma^*(\rho\eta)=\rho\sigma^*\eta=\widetilde{\sigma^*\eta},$$ and thus
\begin{eqnarray}\label{mom}
\\
\sigma^*(\tilde{\xi}_1(\theta_1),\cdots , \tilde{\xi}_k(0),\tilde{\eta})
&=&(\tilde{\xi}_k(\sigma_k),\tilde{\xi}_1(\theta_1+\sigma_1),\cdots , \tilde{\xi}_{k-1}(\theta_{k-1}+\sigma_{k-1}),\widetilde{\sigma^*\eta}),\nonumber
\end{eqnarray}
where all $e^{i \sigma_i}\in C^\infty(\hat{M},S^1)$ are equal to 1 on the cylindrical gluing regions and satisfy $$\sum_{i=1}^k\sigma_i\equiv 0\ \ \textrm{mod}\ 2\pi.$$

\begin{lem}
The $\Bbb Z_k$-action on $\mathcal{B}^*_{\bar{M}_k}$ maps $\tilde{\frak{M}}_{\bar{M}_k}$ to itself.
\end{lem}
\begin{proof}
Since $\sigma^*\beta_i$ also gives an element of $H^1_{cpt}(\hat{N};\Bbb Z)$, let's let $\sigma^*\beta_i$ be cohomologous to $\sum_j d_{ij}\beta_j$ for each $i$. Thus
$$\widetilde{\sigma^*\eta}=\rho\sigma^*(\eta_0+\sum_{i}c_i\beta_i)=\rho\eta_0+\sum_{i}c_i\sigma^*\beta_i$$ is gauge-equivalent to $$\rho\eta_0+\sum_{i,j}c_id_{ij}\beta_j$$ which is the cut-off of $\eta_0+\sum_{i,j}c_id_{ij}\beta_j.$ Also using the fact that all $e^{i\sigma_j}$ are also 1 on the cylindrical gluing regions so that they can be gauged away by a global gauge transformation on $\bar{M}_k$ without affecting $\widetilde{\sigma^*\eta}$, the RHS of (\ref{mom}) is gauge-equivalent to
\begin{eqnarray}\label{papa}
(\tilde{\xi}_k(0),\tilde{\xi}_1(\theta_1),\cdots ,
\tilde{\xi}_{k-1}(\theta_{k-1}),\rho(\eta_0+\sum_{i,j}c_id_{ij}\beta_j))
\end{eqnarray}
which is an approximate solution.
\end{proof}

Moreover we may assume that the map $\Upsilon$ is $\Bbb
Z_k$-equivariant by the following lemma.
\begin{lem}\label{saveme}
$\Upsilon$ can be made $\Bbb Z_k$-equivariant, and the smooth submanifold $\frak{M}_{\bar{M}_k}^{\Bbb Z_k}$ pointwisely fixed under the action is isotopic to $\tilde{\frak{M}}_{\bar{M}_k}^{\Bbb Z_k}$, the fixed point set in $\tilde{\frak{M}}_{\bar{M}_k}$.
\end{lem}
\begin{proof}
To get a $\Bbb Z_k$-equivariant $\Upsilon$, we need to choose a
smooth normal bundle of
$\tilde{\frak{M}}_{\bar{M}_k}\subset\frak{B}^*_{\bar{M}_k}$ in a $\Bbb
Z_k$-equivariant way. This can be achieved by taking the $\Bbb
Z_k$-average of any smooth Riemannian metric defined in a small
neighborhood of $\tilde{\frak{M}}_{\bar{M}_k}$.

A smooth Riemannian metric on a Hilbert manifold is a smoothly
varying bounded positive-definite symmetric bilinear forms on its tangent spaces. In
order to have a well-defined exponential map as a diffoemorphism
on a neighborhood of the origin, we want the metric to be
``strong" in the sense that the metric on each tangent space
induces the same topology as the original Hilbert space topology.
(For a proof, see \cite{kling}.)

Since $\tilde{\frak{M}}_{\bar{M}_k}$ is compact, we use a
partition of unity on it to glue together obvious Hilbert space
metrics in local charts, thereby constructing a smooth Riemannian
metric in a neighborhood of $\tilde{\frak{M}}_{\bar{M}_k}$ in a
Hilbert manifold $\frak{B}^*_{\bar{M}_k}$. Taking its average under the $\Bbb
Z_k$-action, we get a desired Riemannian metric, which is easily
checked to be strong.

Taking the orthogonal complement to the tangent bundle of
$\tilde{\frak{M}}_{\bar{M}_k}$ under the above-obtained metric, we
get its normal bundle which is trivial by being
infinite-dimensional. In the same way as the finite dimensional case, the inverse function theorem implies that a small neighborhood of the zero section in the normal bundle is mapped
diffeomorphically into $\frak{B}^*_{\bar{M}_k}$ by the exponential map. Thus we can view a small
neighborhood of $\tilde{\frak{M}}_{\bar{M}_k}$ as
$\tilde{\frak{M}}_{\bar{M}_k}\times \Bbb H$  where  $\Bbb H$ is
the Hilbert space isomorphic to the orthogonal complement of the
tangent space of $\tilde{\frak{M}}_{\bar{M}_k}$ at any point.

Applying the Newton method, $\Upsilon$ is pointwisely a vertical
translation along $\Bbb H$
direction. Now the first assertion follows from the $\Bbb
Z_k$-invariance of the normal directions.
\end{proof}

As a preparation for finding $\Bbb Z_k$-fixed points of $\tilde{\frak{M}}_{\bar{M}_k}$,
\begin{lem}\label{adam}
$\frak{M}_{\hat{N}}^{\Bbb Z_k}$ is diffeomorphic to $T^{\nu}$, the space of $\Bbb Z_k$-invariant $L^2$-harmonic 1-forms on $\hat{N}$ modulo $\Bbb Z$.
\end{lem}
\begin{proof}
Let $[\eta]\in
\frak{M}_{\hat{N}}^{\Bbb Z_k}$, i.e. $[\sigma^*\eta]=[\eta].$ Then
$$\bar{\eta}:=\frac{1}{k}\sum_{i=1}^k(\sigma^i)^*\eta$$ satisfies that $\sigma^*\bar{\eta}=\bar{\eta}$, and $\bar{\eta}$ is cohomologous to $\eta$ so that  $[\bar{\eta}]=[\eta]$.

When $\nu\ne 0$, complete $b_1,\cdots,b_\nu$ to $b_1,\cdots,b_\nu,\cdots,b_{b_1(N)}\in H_1(N;\Bbb Z)$ so as to compose a basis for $H_1(N;\Bbb R)$, and let $b_1^*,\cdots,b_{b_1(N)}^*\in H^1_{cpt}(\hat{N};\Bbb R)$ be the corresponding dual cohomology classes under the isomorphism
$$H^1_{cpt}(\hat{N};\Bbb R)\simeq H_1(N;\Bbb R)^*.$$ Since
$b_i^*(b_j)=\delta_{ij}$ for all $i,j=1,,\cdots,b_1(N)$,  a simple Linear algebra shows that $b_1^*,\cdots,b_\nu^*$ are not only in
$H^1_{cpt}(\hat{N};\Bbb Z)^{\Bbb Z_k}$, but also form a basis of $H^1_{cpt}(\hat{N};\Bbb
R)^{\Bbb Z_k}$. Therefore $\frak{M}_{\hat{N}}^{\Bbb Z_k}$  is a
$\nu$-dimensional torus spanned by $b_1^*,\cdots,b_\nu^*$.
When $\nu=0$, $\frak{M}_{\hat{N}}^{\Bbb Z_k}$  is a point.
\end{proof}

\begin{lem}
$\tilde{\frak{M}}_{\bar{M}_k}^{\Bbb Z_k}$ is diffeomorphic to $k$ copies of $\frak{M}_{{M}}\times T^{\nu}$, where $T^0$ means a point.
\end{lem}
\begin{proof}
By (\ref{papa}), the condition for a fixed point is that
$$(\tilde{\xi}_k(0),\tilde{\xi}_1(\theta_1),\cdots,
\tilde{\xi}_{k-1}(\theta_{k-1}),\widetilde{\sigma^*\eta})\equiv
(\tilde{\xi}_1(\theta_1),\cdots,\tilde{\xi}_{k-1}(\theta_{k-1}),\tilde{\xi}_k(0),\tilde{\eta}
)$$ modulo gauge transformations. By (\ref{general}) this implies
$$[\xi_{1}]=[\xi_{2}]=\cdots =[\xi_{k}]   \in\frak{M}_{\hat{M}},\ \textrm{and }\ [\sigma^*{\eta}]=[{\eta}]\in \frak{M}_{\hat{N}},$$
and
$$0 \equiv \theta_1+\theta,\ \ \theta_1 \equiv \theta_2+\theta,\cdots, \theta_{k-1}\equiv 0+\theta\ \ \  \textrm{mod}\ 2\pi$$
for some constant $\theta\in [0,2\pi)$. Summing up the above $k$ equations gives $$0\equiv k\theta\ \ \  \textrm{mod}\ 2\pi,$$ and hence
$$\theta=0,\frac{2\pi}{k},\cdots,\frac{2(k-1)\pi}{k},$$ which lead to the corresponding  $k$ solutions
\begin{eqnarray}\label{temp}
[(\tilde{\xi}((k-1)\theta),\tilde{\xi}((k-2)\theta),\cdots
,\tilde{\xi}(\theta) ,
\tilde{\xi}(0),\tilde{\eta})],
\end{eqnarray}
where we let $\xi_i=\xi$ for all $i$ and $[\eta]\in \frak{M}_{\hat{N}}^{\Bbb Z_k}$.
Therefore $\tilde{\frak{M}}_{M_k}^{\Bbb Z_k}$ is diffeomorphic to $k$ copies of $\frak{M}_{\hat{M}}\times\frak{M}_{\hat{N}}^{\Bbb Z_k}\simeq\frak{M}_{M}\times T^{\nu}.$
\end{proof}

\begin{lem}
$\mathfrak{X}_{\bar{M}_k}$  is diffeomorphic to $\frak{M}_{M}\times T^{\nu}$.
\end{lem}
\begin{proof}
Let $$\tilde{\Xi}_{\theta}=(\tilde{\xi}((k-1)\theta-\sum_{i=1}^{k-1}\sigma_i),\tilde{\xi}((k-2)\theta-\sum_{i=2}^{k-1}\sigma_i),\cdots,\tilde{\xi}(\theta-\sigma_{k-1}), \tilde{\xi}(0),\tilde{\eta})$$ for $\xi\in \frak{M}_{\hat{M}}$, $\eta\in \mathfrak{X}_{\hat{N}}$, and $\theta$ as above. Because all $e^{i\sigma_j}$ are also 1 on the cylindrical gluing regions, the gauge-equivalence class $[\tilde{\Xi}_{\theta}]$ is equal to the above (\ref{temp}), and moreover it has a nice property $$\sigma^*\tilde{\Xi}_\theta=e^{i\theta}\cdot\tilde{\Xi}_\theta,$$ where $\cdot$ denotes the gauge action.
Let's denote $\Upsilon([\tilde{\Xi}_\theta])$ also by  $[\Xi_\theta]$.

We will show that $k-1$ copies of  $\frak{M}_{M}\times T^{\nu}$ corresponding to nonzero $\theta$ do not belong to  $\mathfrak{X}_{\bar{M}_k}$.
Let $\theta=\frac{2\pi}{k},\cdots,\frac{2(k-1)\pi}{k}$. By the $\Bbb Z_k$-equivariance of $\Upsilon$,
$[\sigma^*\Xi_\theta]=\sigma^*[\Xi_\theta]$, and so write  $$\sigma^*\Xi_\theta=e^{i\vartheta}\cdot\Xi_\theta \ \ \ \textrm{for}\ e^{i\vartheta}\in Map(\bar{M}_k,S^1).$$ By taking the cylindrical length sufficiently large,  $e^{i\vartheta}$ can be made arbitrarily close to the constant $e^{i\theta}$ in a Sobolov norm and hence $C^0$-norm too by the Sobolov embedding theorem. (The Sobelev embedding constant does not change, if the cylindrical length gets large, because the local geometries remain unchanged.)


Assume to the contrary that $\sigma^*(\frak{g}\cdot\Xi_\theta)=\frak{g}\cdot\Xi_\theta$ for some $\frak{g} \in \mathcal{G}$.
Then combined with that
\begin{eqnarray*}
\sigma^*(\frak{g}\cdot\Xi_\theta)&=& \sigma^*(\frak{g})\cdot\sigma^*(\Xi_\theta)\\ &=& \sigma^*(\frak{g})\cdot (e^{i\vartheta}\cdot\Xi_\theta)\\ &=& (\sigma^*(\frak{g})e^{i\vartheta})\cdot \Xi_\theta,
\end{eqnarray*}
it follows that
$$\frak{g}\cdot \Xi_\theta = (\sigma^*(\frak{g})e^{i\vartheta})\cdot \Xi_\theta,$$
which implies that
\begin{eqnarray}\label{prayer2}
\sigma^*(\frak{g})=\frak{g}e^{-i\vartheta}\approx \frak{g}e^{-i\theta},
\end{eqnarray}
where the equality is due to the continuity of $\frak{g}$ and the fact that the spinor part of $\Xi_\theta$ is not identically zero on an open subset by the unique continuation property, and the notation $\approx$ means $C^0$-close.

Choose a fixed point $p\in \bar{M}_k$under the $\Bbb
Z_k$-action.\footnote{This is the only place where we use the condition
that the action on $N$ has a fixed point, which was assumed in the
beginning of the proof of current theorem.}   Evaluating
(\ref{prayer2}) at the point $p$ gives $$\frak{g}(p)\approx \frak{g}(p)e^{-i\theta},$$ yielding a desired contradiction.

It remains to show that $\frak{M}_{M}\times T^{\nu}$ corresponding to $\theta=0$ belongs to $(\mathcal{A}(W_+)^G\times (\Gamma(W_+)^G-\{0\}))/ \mathcal{G}^G$.  Let $\Xi_0=\tilde{\Xi}_0+(a,\varphi)$ where $a\in \Gamma(\Lambda^1(\bar{M}_k;i\Bbb R))$ satisfies the Lorentz gauge condition $d^*a=0$. Since
$$\sigma^*\Xi_0=\tilde{\Xi}_0+(\sigma^*a,\sigma^*\varphi)$$ belongs to the same gauge equivalence class as  $\Xi_0$, and $$d^*(\sigma^*a)=\sigma^*(d^*a)=0$$ using the isometric action of $G$, we have that $\sigma^*a\equiv a$ modulo $H^1(\bar{M}_k;\Bbb Z)=\Bbb Z^{b_1(\bar{M}_k)}$. Applying the obvious identity $(\sigma^*)^k=\textrm{Id}$, it follows that $\sigma^*a=a$. This implies that $\sigma^*\Xi_0$ is a constant gauge transform $e^{ic}\cdot \Xi_0$ of $\Xi_0$, where $c$ is one of $0,\frac{2\pi}{k},\cdots,\frac{2(k-1)\pi}{k}$. Since $\sigma^*\Xi_0\approx \Xi_0$, $c$ has to be 0.
Therefore $\sigma^*\Xi_0=\Xi_0$ as desired, and we conclude that  $\mathfrak{X}_{\bar{M}_k}$ is equal to $\frak{M}_{M}\times T^{\nu}$ .
\end{proof}

\begin{lem}\label{LHW}
The $\mu$ cocycles on $\frak{M}_{{M}}\times T^\nu$ and  $\mathfrak{X}_{\bar{M}_k}$ coincide, i.e.
$$\mu_{M}(a_i)=\mu_{\bar{M}_k}(a_i),\ \ \ \  \mu_{N}(b_i)=\mu_{\bar{M}_k}(b_i),\ \ \ \ \mu_{M}(U)=\mu_{\bar{M}_k}(U)$$ where the equality means the identification under the above diffeomorphism.
\end{lem}
\begin{proof}
The first equality comes from that the holonomy
maps $Hol_{a_i}$ defined on $\frak{M}_{{M}}$ and
$\tilde{\frak{M}}_{\bar{M}_k}^{\Bbb Z_k}$ are just the same, when
the representative of $a_i$ is  chosen away from the gluing
regions.  Using the isotopy between  $\frak{M}_{\bar{M}_k}^{\Bbb
Z_k}$ and $\tilde{\frak{M}}_{\bar{M}_k}^{\Bbb Z_k}$, the induced
maps $Hol^*_{a_i}$ from $H^1(S^1;\Bbb Z)$ to $H^1(\frak{M}_{{M}};\Bbb Z)$ and
$H^1(\frak{M}_{\bar{M}_k}^{\Bbb Z_k};\Bbb Z)$ are the same so that
$$\mu_{M}(a_i)=Hol^*_{a_i}([d\theta])=\mu_{\bar{M}_k}(a_i)$$
for each $i$. Likewise for the second equality.

For the third equality, note that the $S^1$-fibrations on
$\frak{M}_{\hat{M}}\times T^\nu$ and
$\tilde{\frak{M}}_{\bar{M}_k}^{\Bbb Z_k}$ induced by the
$\mathcal{G}/\mathcal{G}_o$ action are isomorphic in an obvious
way, where the $T^\nu$ part is fixed under the
$\mathcal{G}/\mathcal{G}_o$ action. Since the isotopy between
$\tilde{\frak{M}}_{\bar{M}_k}$ and $\frak{M}_{\bar{M}_k}$ can be
extended to the $S^1$-fibrations  induced by  the
$\mathcal{G}/\mathcal{G}_o$ action, those $S^1$-fibrations are
isomorphic. In the same way using gluing theory, there are
isomorphisms of $S^1$-fibraions on  $\frak{M}_{M}$, its approximate moduli space
$\tilde{\frak{M}}_{M}$, and $\frak{M}_{\hat{M}}$. Therefore we
have an isomorphism between those $S^1$-fibrations on
$\frak{M}_{M}\times T^\nu$ and $\mathfrak{X}_{\bar{M}_k}$.
\end{proof}

Now we come to the evaluation of the Seiberg-Witten invariant
on $\mathfrak{X}_{\bar{M}_k}$.  Suppose $\nu\ne 0$. Let
$l_1,\cdots,l_{b_1(N)}$ be loops representing homology classes $b_1,\cdots,b_{b_1(N)}$
respectively.  Then $b_i^*$ introduced in Lemma \ref{adam}
restricts to a nonzero element  of
$H^1(l_j;\Bbb Z)$ iff $i=j$. Moreover $b_i^*$ is a generator of $H^1(l_j;\Bbb Z)$, and hence
$\{\mu(b_1),\cdots,\mu(b_\nu)\}$ is a standard generator of the 1st
cohomology of $T^\nu\simeq \Bbb R\langle
b_1^*,\cdots,b_\nu^*\rangle/\Bbb Z\langle
b_1^*,\cdots,b_\nu^*\rangle$. Combining the fact that $\mu(b_1)\wedge
\cdots \wedge \mu(b_{\nu})$ is a generator of $H^\nu(T^{\nu};\Bbb
Z)$ with the above identification of $\mu$-cocycles, we can conclude that
$$SW^{\Bbb Z_k}_{\bar{M}_k,\bar{\frak{s}}}(U^dA\wedge
b_1\wedge\cdots\wedge b_\nu)\equiv SW_{M,\frak{s}}(U^dA)\ \ \
\textrm{mod}\ 2$$ for $A=1$ or $a_1\wedge\cdots\wedge a_j$. The
case of $\nu=0$ is just a special case.

Finally let's prove the theorem when the action on $N$ is free. In this case we can apply Theorem \ref{nakamur} and the gluing theory, we get diffeomorphisms
\begin{eqnarray*}
\frak{X}_{\bar{M}_k,\bar{\mathfrak{s}}}&\simeq& \mathfrak{M}_{M\# N/\Bbb Z_k,\mathfrak{s}\# \frak{s}_N'}\\
 &\simeq& \mathfrak{M}_{M,\mathfrak{s}}\times \mathfrak{M}^{red}_{N/\Bbb Z_k, \frak{s}_N'}\\
 &\simeq& \mathfrak{M}_{M,\mathfrak{s}}\times T^\nu,
\end{eqnarray*}
where $\frak{s}_N'$ is the Spin$^c$ structure on $N/\Bbb Z_k$ induced from $\frak{s}_N$ and its $\Bbb Z_k$ action induced from that of $\bar{\frak{s}}$. The rest is the same as the non-free case.
This completes all the proof.
\end{proof}

\begin{rmk}
If the diffeomorphism between $\frak{X}_{\bar{M}_k}$ and $\mathfrak{M}_{M}\times T^\nu$
is orientation-preserving, then $\Bbb Z_k$-monopole invariants and Seiberg-Witten invariants are exactly the same.
We conjecture that the above diffeomorphism between $\frak{X}_{\bar{M}_k}$  and $\frak{M}_{M}\times T^{\nu}$ is orientation-preserving, when the homology orientations are appropriately chosen.

One may try to prove $\frak{X}_{\bar{M}_k}\simeq \frak{M}_{M}\times T^{\nu}$ by gluing $G$-monopole moduli spaces directly. But the above method of proof by gluing ordinary moduli spaces also shows that $\frak{M}_{\bar{M}_k}^{\Bbb Z_k}$ is diffeomorphic to $k$ copies of $\frak{M}_{M}\times T^{\nu}$.

Lemma \ref{LHW} is also true for any other component of $\frak{M}_{\bar{M}_k}^{\Bbb Z_k}$.
\end{rmk}

\section{Connected sums and $\Bbb Z_k$-equivariant Bauer-Furuta invariant}\label{BF-section}
For a more general $G$ action on $N$ than that of Theorem \ref{firstth}, we can compute  $\overline{BF}^{G}_{\bar{M}_k,\bar{\frak{s}}}$.
\begin{thm}\label{BF-thm}
Let $M$ and $N$ be  smooth closed oriented connected 4-manifolds satisfying
$b_2^+(M)> 1$ and $b_2^+(N)=0$, and $\bar{M}_k$ for any $k\geq 2$ be the connected sum
$M\#\cdots \#M\# N$ where there are $k$ summands of $M$.

Suppose that a finite group $G$ with $|G|=k$ acts effectively on $N$ in a smooth
orientation-preserving way, and that $N$ admits a Riemannian metric of positive scalar curvature invariant under the $G$-action and a $G$-equivariant Spin$^c$ structure $\frak{s}_N$ with $c_1^2(\frak{s}_N)=-b_2(N)$.

Define a $G$-action on $\bar{M}_k$ induced from that of $N$
permuting $k$ summands of $M$ glued along
a free orbit in $N$, and let $\bar{\frak{s}}$ be the
Spin$^c$ structure on $\bar{M}_k$ obtained by gluing $\frak{s}_N$ and a Spin$^c$
structure $\frak{s}$ of $M$.

Then for any $G$-action on $\bar{\frak{s}}$ covering the above
$G$-action on $\bar{M}_k$,
$$\overline{BF}^{G}_{\bar{M}_k,\bar{\frak{s}}}=BF_{M,\frak{s}}\wedge \overline{BF}^{G}_{N,\frak{s}_N},$$ and when $b_1(N)^{G}=0$,
$$\overline{BF}^{G}_{\bar{M}_k,\bar{\frak{s}}}=BF_{M,\frak{s}}.$$
If $BF_{M,\frak{s}}$ is nontrivial, so is $\overline{BF}^{G}_{\bar{M}_k,\bar{\frak{s}}}$.
\end{thm}
\begin{proof}
Let $\tilde{M}_k=N\cup \amalg_{i=1}^k(M\cup S^4)$ be the disjoint union of $N$ and $k$-copies of $M\cup S^4$,
and endow it with a Spin$^c$ structure $\tilde{\frak{s}}$ which is $\frak{s}_N$ on $N$,
$\frak{s}$ on each $M$, and the trivial Spin$^c$ structure $\frak{s}_0$ on each $S^4$. Then  $(\tilde{M}_k,\tilde{\frak{s}})$ has an obvious $G$-action induced from the $G$-action on $\bar{\frak{s}}$ in a unique way up
to homotopy. (Here $G$ acts on $\amalg_{i=1}^kS^4$ by the obvious  permutation, and on its Spin$^c$ structure as induced from the action on $\bar{\frak{s}}$ over the cylindrical gluing regions.)

Just as the ordinary monopole maps shown in \cite{bau}, the stable cohomotopy class of the disjoint union of $G$-monopole maps is equal to the smash product $\wedge $ of those, and hence
\begin{eqnarray*}
\overline{BF}^{G}_{\tilde{M}_k,\tilde{\frak{s}}}&=& \overline{BF}^{G}_{\amalg_{i=1}^k(M\cup S^4),\amalg_{i=1}^k(\frak{s}\amalg \frak{s}_0)}\wedge \overline{BF}^{G}_{N,\frak{s}_N}\\ &=& BF_{M\cup S^4,\frak{s}\amalg \frak{s}_0}\wedge \overline{BF}^{G}_{N,\frak{s}_N}\\ &=& BF_{M,\frak{s}}\wedge BF_{S^4,\frak{s}_0}\wedge \overline{BF}^{G}_{N,\frak{s}_N}\\ &=& BF_{M,\frak{s}}\wedge  \overline{BF}^{G}_{N,\frak{s}_N},
\end{eqnarray*}
where we used the fact that $BF_{S^4,\frak{s}_0}$ is just $[id]\in \pi^0_{S^1}(\textrm{pt})\cong \Bbb Z$, which was shown in \cite{bau}.

A surgery following S. Bauer \cite{bau} turns $\tilde{M}_k$ into the union of $\bar{M}_k$ and $k$-copies of $S^4\amalg S^4$. In the notations of \cite{bau}, for $X=X_1\cup X_2\cup X_3=\tilde{M}_k$, we take $$X_1=N=(N-\amalg_{i=1}^kD^4)\cup(\amalg_{i=1}^kD^4),$$ $$X_2=\amalg_{i=1}^kM=(\amalg_{i=1}^kD^4)\cup (\amalg_{i=1}^k(M-D^4)),$$ $$X_3=\amalg_{i=1}^kS^4=(\amalg_{i=1}^kD^4)\cup(\amalg_{i=1}^kD^4),$$ and
$$\tau=\left(                                                                                                      \begin{array}{ccc}                                                                                                        1 & 2 & 3 \\
2 & 3 & 1 \\                                                                                                      \end{array}                                                                                                    \right),$$ where $X_3$ is needed to make $\tau$  an even permutation so that ``the gluing map $V$" of Hilbert bundles along the necks is well-defined continuously. After interchanging the second half parts of $X_i$'s by $\tau$, we get  $$X^\tau=X_1^\tau\cup X_2^\tau\cup X_3^\tau=\bar{M}_k\cup(\amalg_{i=1}^kS^4)\cup (\amalg_{i=1}^kS^4)$$ as desired.\footnote{The gluing theorem 2.1 of \cite{bau} was stated when each $X_i$ is connected with one gluing neck, but the proof also works well without this assumption. For more details, readers are refereed to \cite{bau}.}

Most importantly, we perform the above surgery from $\tilde{M}_k$ to $\bar{M}_k\cup \amalg_{i=1}^{2k}S^4$ in a $G$-invariant way, and also ``the gluing map $V$" from the Hilbert bundles $\mathcal{E}^G,\mathcal{F}^G$  on $Pic^{G}(\tilde{M}_k)$ to the Hilbert bundles on $Pic^{G}(\bar{M}_k\cup \amalg_{i=1}^{2k}S^4)$ in a $G$-invariant way. The homotopy of the ordinary monopole map of $\tilde{M}_k$ shown in \cite{bau} can also be done in a $G$-invariant way. Then those $G$-monopole maps of $\tilde{M}_k$ and $\bar{M}_k\cup \amalg_{i=1}^{2k}S^4$ are conjugate via ``the gluing map $V$" up to $G$-invariant homotopy. Therefore their stable cohomotopy classes are equal so that
\begin{eqnarray*}
\overline{BF}^{G}_{\tilde{M}_k,\tilde{\frak{s}}}&=&\overline{BF}^{G}_{\bar{M}_k\cup \amalg_{i=1}^{2k}S^4,\bar{\frak{s}}\amalg \frak{s}_0}\\
&=& \overline{BF}^{G}_{\bar{M}_k,\bar{\frak{s}}}\wedge \overline{BF}^{G}_{\amalg_{i=1}^{2k}S^4,\frak{s}_0}\\
&=& \overline{BF}^{G}_{\bar{M}_k,\bar{\frak{s}}}\wedge  BF_{S^4\amalg S^4,\frak{s}_0}\\
&=& \overline{BF}^{G}_{\bar{M}_k,\bar{\frak{s}}}\wedge  BF_{S^4,\frak{s}_0}\wedge  BF_{S^4,\frak{s}_0}\\
&=& \overline{BF}^{G}_{\bar{M}_k,\bar{\frak{s}}},
\end{eqnarray*}
where we again used that $BF_{S^4,\frak{s}_0}=[id]$.

Therefore we obtained
$$\overline{BF}^{G}_{\bar{M}_k,\bar{\frak{s}}}=BF_{M,\frak{s}}\wedge \overline{BF}^{G}_{N,\frak{s}_N},$$ and it gets equal to $BF_{M,\frak{s}}$ in case of $b_1(N)^{G}=0$ by the following lemma :
\begin{lem}
If $b_1(N)^{G}=0$, then $\overline{BF}^{G}_{N,\frak{s}_N}$ is the class of the identity map $$[id]\in \pi^0_{S^1}(\textrm{pt})\cong \Bbb Z.$$
\end{lem}
\begin{proof}
The method of proof is basically the same as the ordinary Bauer-Furuta invariant in \cite{bau}.

First, we need to show that the $G$-index of the Spin$^c$ Dirac operator is zero. Take a $G$-invariant metric of positive scalar curvature on $N$.
Using the homotopy invariance of a $G$-index, we compute the index at a $G$-invariant connection $A_0$ whose curvature 2-form is harmonic and hence anti-self-dual.

Applying the Weitzenb\"ock formula with the fact that the scalar curvature of $N$ is positive, and the curvature 2-form is anti-self-dual, we get zero kernel.
Now then from the vanishing of the ordinary index given by $(c_1^2-\tau(N))/8$,   the cokernel must be also zero. In particular, we have vanishing of $G$-invariant kernel and cokernel, implying that the $G$-index is zero.

Then along with $b_1(N)^{G}=b_2^+(N)^{G}=0$, we conclude that $\overline{BF}^{G}_{N,\frak{s}_N}$ belongs to $\pi^0_{S^1}(\textrm{pt})$ which is isomorphic to $\pi^0_{st}(\textrm{pt})=\Bbb Z$ by the isomorphism induced by restriction to the $S^1$-fixed point set on which  the $G$-monopole map is just the linear isomorphism :
$$L^2_{m+1}(\Lambda^1N)^{G}\times H^0(N)^{G} \rightarrow L^2_{m}(\Lambda^2_+N)^{G}\times L^2_{m}(\Lambda^0N)^{G}\times H^1(N)^{G}$$
$$(a,c)\mapsto (d^+a,d^*a+c,a^{harm}),$$
because it has no kernel and cokernel. This completes the proof.
\end{proof}

Now let's consider the case of $b_1(N)^{G}\geq 1$.
Again the $G$-index bundle of the Spin$^c$ Dirac operator over $Pic^{G}(N)$ is zero so that $\overline{BF}^G_{N,\frak{s}_N}$ belongs to $\pi^0_{S^1}(T^{b_1(N)^{G}})$.

Following \cite{IL}, we consider the restriction map $$\sigma : \pi^0_{S^1}(T^{b_1(N)^{G}})\rightarrow \pi^0_{S^1}(\textrm{pt})$$  to the fiber over a point in $Pic^{G}(N)$. By the same method as the above lemma, $\sigma(\overline{BF}^G_{N,\frak{s}_N})$ is just the identity map.  Then the restriction of $\overline{BF}^G_{\bar{M}_k,\bar{\frak{s}}}=\overline{BF}^G_{\tilde{M}_k,\tilde{\frak{s}}}$ to $$Pic^{G}(\amalg_{i=1}^k(M\cup S^4))\times \{\textrm{pt}\} \subset Pic^{G}(\tilde{M}_k)=Pic^{G}(\bar{M}_k)$$ is given by $$BF_{M,\frak{s}}\wedge \sigma(\overline{BF}^G_{N,\frak{s}_N})=BF_{M,\frak{s}}.$$ It is obvious that  $\overline{BF}^{G}_{\bar{M}_k,\bar{\frak{s}}}$ is nontrivial, when $\sigma(\overline{BF}^G_{\bar{M}_k,\bar{\frak{s}}})$ is nontrivial, which completes the proof.
\end{proof}

\section{Examples of $(N,\frak{s}_N)$ of Theorem \ref{firstth}}

In this section,  $G, H$ and $K$ denote compact Lie groups. Let's  recall some elementary facts on equivariant principal bundles.
\begin{defn}
A principal $G$ bundle $\pi : P \rightarrow M$ is said to be $K$-equivariant if $K$ acts left on
both $P$ and $M$ in such a way that

(1) $\pi$ is $K$-equivariant :
$$\pi(k\cdot p) = k\cdot\pi(p)$$ for all $k\in K$ and $p\in P$,

(2) the left action of $K$ commutes with the right action of $G$ :
$$k\cdot(p\cdot g) = (k\cdot p)\cdot g$$ for all $k\in K, p\in P$, and $g\in G$.
\end{defn}
If $H$ is a normal subgroup of $G$, then one can define a principal $G/H$ bundle $P/H$ by taking the fiberwise quotient of $P$ by $H$. Moreover if $P$ is $K$-equivariant under a left $K$ action, then there exists the induced $K$ action on $P/H$ so that $P/H$ is $K$-equivariant.

\begin{lem}\label{jacob}
Let $P$ and  $\tilde{P}$ be a principal $G$ and $\tilde{G}$ bundle
respectively over a smooth manifold $M$ such that $\tilde{P}$
double-covers $P$ fiberwisely. For a normal subgroup $H$
containing $\Bbb Z_2$ in both $\tilde{G}$ and $S^1$ where the
quotient of $\tilde{G}$ by that $\Bbb Z_2$ gives $G$, let
$$\tilde{P}\otimes_{H}S^1:=(\tilde{P}\times_M (M\times S^1))/ H$$
be the quotient of the fiber product of $\tilde{P}$ and the
trivial $S^1$ bundle $M\times S^1$ by $H$, where the right $H$
action is given by $$(p,(x,e^{i\vartheta}))\cdot h=(p\cdot h,
(x,e^{i\vartheta}h^{-1})).$$

Suppose that $M$ and $P$ admit a smooth $S^1$ action such that $P$
is $S^1$-equivariant.  Then a principal $\tilde{G}\otimes_{H}S^1$
bundle $\tilde{P}\otimes_{H}S^1$ is also $S^1$-equivariant by
lifting the action on $P$. In particular, any smooth $S^1$-action
on a smooth spin manifold lifts to its trivial Spin$^c$ bundle so
that the Spin$^c$ structure is $S^1$-equivariant.
\end{lem}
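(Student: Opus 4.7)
The plan is to lift the $S^1$-action to $\tilde{P}\otimes_H S^1$ by first pulling back through the universal cover $\Bbb R\to S^1=\Bbb R/2\pi\Bbb Z$, lifting the resulting $\Bbb R$-action to $\tilde{P}$ using simple connectivity of $\Bbb R$, and then using the $S^1$-factor in $\tilde{P}\otimes_H S^1$ to absorb the possible $\Bbb Z_2$-ambiguity so as to descend to a genuine $S^1$-action.

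First, I would let $\Phi_s:P\to P$ for $s\in\Bbb R$ be the pullback of the given $S^1$-action along $\Bbb R\to S^1$, so that $\Phi_{s+2\pi}=\Phi_s$. Since $\pi:\tilde{P}\to P$ is a smooth $\Bbb Z_2$-cover and $\Bbb R$ is simply connected, path-lifting applied to each trajectory $s\mapsto\Phi_s(\pi(\tilde{p}))$ starting from $\tilde{p}$ produces a smooth lift $\tilde{\Phi}:\Bbb R\times\tilde{P}\to\tilde{P}$ with $\tilde{\Phi}_0=\mathrm{id}$. By uniqueness of lifts, $\tilde{\Phi}$ is an $\Bbb R$-action on $\tilde{P}$, and each $\tilde{\Phi}_s$ commutes with the right $\tilde{G}$-action because $\tilde{\Phi}_s\circ R_g$ and $R_g\circ\tilde{\Phi}_s$ are both lifts of $\Phi_s\circ\pi\circ R_g=R_{\bar g}\circ\Phi_s\circ\pi$ agreeing at $s=0$.

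Second, I would analyze $\tilde{\Phi}_{2\pi}$, which covers $\Phi_{2\pi}=\mathrm{id}_P$ and hence must be multiplication by a locally constant, and therefore constant, element $\epsilon\in\Bbb Z_2\subset H\subset\tilde{G}$. If $\epsilon=1$, the $\Bbb R$-action already descends to an $S^1$-action on $\tilde{P}$, which visibly induces one on $\tilde{P}\otimes_H S^1$. If $\epsilon=-1$, I would define the action on $\tilde{P}\otimes_H S^1$ by
$$s\cdot[\tilde{p},e^{i\vartheta}]:=[\tilde{\Phi}_s(\tilde{p}),\,e^{is/2}e^{i\vartheta}].$$
At $s=2\pi$ this equals $[\tilde{p}\cdot(-1),-e^{i\vartheta}]=[\tilde{p},e^{i\vartheta}]$ by the defining relation of $\otimes_H$ with $h=-1\in\Bbb Z_2\subset H$, so the $\Bbb R$-action factors through $S^1$. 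The group-action property $(s+s')\cdot\xi=s\cdot(s'\cdot\xi)$, commutativity with the right $(\tilde{G}\times S^1)/H$-action, and covering of $\Phi_s$ under the projection $\tilde{P}\otimes_H S^1\to P$ are all immediate from the corresponding properties of $\tilde{\Phi}$.

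The main obstacle is Step~1, the construction of the $\Bbb R$-equivariant lift to $\tilde{P}$; this is handled by standard covering-space theory because $\tilde{P}\to P$ is a double cover and $\Bbb R$ is simply connected. The crucial extra flexibility in $\tilde{P}\otimes_H S^1$ is that the $S^1$-factor supplies a square root of the obstruction $-1\in\Bbb Z_2$, namely $e^{is/2}$ evaluated at $s=2\pi$, which is unavailable inside $\tilde{P}$ itself. For the \emph{in particular} statement, one applies the lemma with $\tilde{G}=\mathrm{Spin}(n)$, $G=SO(n)$, $H=\Bbb Z_2$, and $P$ the oriented frame bundle of the spin manifold $M$, which is automatically $S^1$-equivariant since any $S^1$-action on a connected oriented manifold preserves orientation; then $\tilde{P}\otimes_{\Bbb Z_2}(M\times S^1)$ is exactly the trivial $\mathrm{Spin}^c$-bundle.
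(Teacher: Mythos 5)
Your proof is correct and follows essentially the same route as the paper: the paper lifts the action locally to $\tilde{P}$, notes that the only possible monodromy obstruction is $\Bbb Z_2$, and cancels it against the $\Bbb Z_2$ monodromy of the half-angle rotation $e^{i\vartheta}\mapsto e^{i\vartheta/2}$ on the trivial $S^1$ factor --- exactly your $e^{is/2}$ term. Your packaging via the lifted $\Bbb R$-action and the deck transformation $\tilde{\Phi}_{2\pi}=\epsilon\in\Bbb Z_2$ is just a cleaner, more explicit way of organizing the same cancellation.
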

\begin{proof}
Any left $S^1$ action on $P$ can be lifted to $\tilde{P}$ uniquely
at least locally commuting with the right $\tilde{G}$ action. If
the monodromy is trivial for any orbit, then the $S^1$ action can
be globally well-defined on $\tilde{P}$, and hence on
$\tilde{P}\otimes_{H}S^1$, where the $S^1$ action on the latter
$S^1$ fiber can be any left action, e.g. the trivial action,
commuting with the right $S^1$ action.

If the monodromy is not trivial, it has to be $\Bbb Z_2$ for any
orbit,  because the orbit space is connected. In that case, we
need the trivial $S^1$ bundle $M\times S^1$ with an ``ill-defined"
$S^1$ action with monodromy $\Bbb Z_2$ defined as follows.

First consider the double covering map from $M\times S^1$ to
itself defined by  $(x,z)\mapsto (x,z^2)$. Equip the downstairs
$M\times S^1$ with the left $S^1$ action which acts on the base as
given and on the fiber $S^1$ by the  multiplication as complex
numbers. Then this downstairs action can be locally lifted to the
upstairs commuting with the right $S^1$ action. Most importantly,
it has $\Bbb Z_2$ monodromy as desired. Explicitly,
$e^{i\vartheta}$ for $\vartheta\in[0,2\pi)$ acts on the fiber
$S^1$ by the  multiplication of $e^{i\frac{\vartheta}{2}}$.
Combining this with the local action on $\tilde{P}$, we get a
well-defined $S^1$ action on $\tilde{P}\otimes_{H}S^1$, because
two $\Bbb Z_2$ monodromies are cancelled each other.

Once the $S^1$ action on $\tilde{P}\otimes_{H}S^1$ is globally well-defined, it commutes with the right $\tilde{G}\otimes_{H}S^1$  action, because the local $S^1$ action on $\tilde{P}\times S^1$ commuted with the right $\tilde{G}\times S^1$ action.

If $S^1$ acts on a smooth manifold, the orthonormal frame bundle is always $S^1$-equivariant under the action. Then by the above result  any $S^1$ action on a smooth spin manifold lifts to  the trivial Spin$^c$ bundle which is $(\textrm{spin bundle})\otimes_{\Bbb Z_2}S^1$.
\end{proof}

\begin{lem}\label{joseph}
Let $P$ be a flat principal $G$ bundle over a smooth manifold $M$ with a smooth $S^1$ action. Suppose that the action can be lifted to the universal cover $\tilde{M}$ of $M$. Then it can be also   lifted to $P$ so that $P$ is $S^1$-equivariant.
\end{lem}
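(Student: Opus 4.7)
Flatness of $P$ gives a holonomy representation $\rho:\pi_1(M)\to G$ identifying $P$ with the associated bundle $\tilde{P}:=(\tilde{M}\times G)/\pi_1(M)$, where $\pi_1(M)$ acts by $\gamma\cdot(\tilde{x},g)=(\gamma\cdot\tilde{x},\rho(\gamma)g)$ and the right $G$-action is $(\tilde{x},g)\cdot h=(\tilde{x},gh)$. My plan is to define an $S^1$-action on $\tilde{M}\times G$ using the hypothesized lift $\tilde{\phi}_\theta:\tilde{M}\to\tilde{M}$ on the first factor and the trivial action on the second factor, verify that it commutes with both the $\pi_1(M)$-action and the right $G$-action, and then pass to the quotient to obtain the desired equivariant structure on $P$.

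Commutation with right $G$-multiplication is automatic. The main step, which I expect to be the principal obstacle, is to establish the identity
$$\gamma\cdot\tilde{\phi}_\theta(\tilde{x})=\tilde{\phi}_\theta(\gamma\cdot\tilde{x})$$
for every $\gamma\in\pi_1(M)$, $\theta\in S^1$, and $\tilde{x}\in\tilde{M}$, so that the diagonal $S^1$-action descends to $\tilde{P}$. I plan to prove this via uniqueness of covering-space lifts. Consider the two continuous maps $\psi_1,\psi_2:S^1\times\tilde{M}\to\tilde{M}$ defined by $\psi_1(\theta,\tilde{x})=\gamma\cdot\tilde{\phi}_\theta(\tilde{x})$ and $\psi_2(\theta,\tilde{x})=\tilde{\phi}_\theta(\gamma\cdot\tilde{x})$. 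Since each deck transformation projects to the identity on $M$, both $\psi_j$ are lifts to $\tilde{M}$ of the common map $\phi\circ(\mathrm{id}_{S^1}\times\pi):S^1\times\tilde{M}\to M$; and they agree at $(0,\tilde{x}_0)$ for any chosen basepoint (both equal $\gamma\cdot\tilde{x}_0$, using $\tilde{\phi}_0=\mathrm{id}$). Since $S^1\times\tilde{M}$ is connected, uniqueness of lifts then forces $\psi_1=\psi_2$.

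With this identity in hand, the $S^1$-action on $\tilde{M}\times G$ descends to a well-defined $S^1$-action on $\tilde{P}\cong P$ which commutes with the right $G$-action and covers the original $\phi_\theta$ on $M$, exhibiting $P$ as an $S^1$-equivariant principal $G$-bundle. A minor technical point: the uniqueness-of-lifts step should be applied componentwise if $\tilde{M}$ is disconnected, but since $M$ is connected this reduces to the single-component case.
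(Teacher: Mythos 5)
Your proposal is correct and follows essentially the same route as the paper: identify $P$ with the flat associated bundle $(\tilde{M}\times G)/\pi_1(M)$, let $S^1$ act trivially on the $G$-factor via the lifted action on $\tilde{M}$, and descend. You are in fact more careful than the paper in one respect, namely in proving via uniqueness of covering-space lifts that the lifted $S^1$-action commutes with the deck transformations, a point the paper's proof leaves implicit.
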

\begin{proof}
For the covering map $\pi: \tilde{M}\rightarrow M$, the pull-back
bundle $\pi^*P$ is the trivial bundle $\tilde{M}\times G$.  By
letting $S^1$ act on the fiber $G$ trivially, $\pi^*P$ can be made
$S^1$-equivariant. For the deck transformation group $\pi_1(M)$,
$P$ is gotten by an element of  $\textrm{Hom}(\pi_1(M),G)$. Any
deck transformation acts on each fiber $G$  as the left
multiplication of a constant in $G$ so that it commutes with not
only the right $G$ action but also the left $S^1$ action which is
trivial on the fiber $G$. Therefore the $S^1$ action on $\pi^*P$
projects down to an $S^1$ action on $P$. To see whether this $S^1$
action commutes with the right $G$ action, it's enough to check
for the local $S^1$ action, which can be seen upstairs on
$\pi^*P$.
\end{proof}

\begin{lem}
On a smooth closed oriented 4-manifold $N$ with $b_2^+(N)=0$, any  Spin$^c$ structure $\frak{s}$ satisfies $$c_1^2(\frak{s})\leq -b_2(N),$$ and the choice of a Spin$^c$ structure $\frak{s}_N$ satisfying $c_1^2(\frak{s}_N)=-b_2(N)$ is always possible.
\end{lem}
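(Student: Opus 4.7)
The plan is to reduce to a lattice-theoretic fact about characteristic vectors, combined with Donaldson's theorem on definite intersection forms. Since $b_2^+(N)=0$, we have $b_2(N)=b_2^-(N)$ and the intersection form on the free quotient $\Lambda:=H^2(N;\Bbb Z)/\textrm{torsion}$ is a negative definite unimodular lattice of rank $b_2(N)$.

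First I would observe that $c_1(\frak{s})$ reduces mod $2$ to $w_2(N)$, so its image $\bar c_1(\frak{s})\in\Lambda$ is a \emph{characteristic} vector (i.e.\ $\bar c_1(\frak{s})\cdot x\equiv x\cdot x\pmod 2$ for every $x\in\Lambda$), and $c_1^2(\frak{s})=\bar c_1(\frak{s})\cdot \bar c_1(\frak{s})$ because torsion classes pair to zero under cup product. Next, by Donaldson's diagonalization theorem the negative definite lattice $\Lambda$ is isometric to $\langle -1\rangle^{b_2(N)}$, so in a suitable orthonormal basis $\{e_i\}$ the characteristic vectors are exactly the $\xi=\sum a_ie_i$ with every $a_i$ odd. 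Since each $a_i^2\geq 1$, this yields $\xi^2=-\sum a_i^2\leq -b_2(N)$, which is the asserted inequality.

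For the existence of $\frak s_N$ realizing equality, I would pick the explicit characteristic vector $c=\sum_i e_i\in\Lambda$ (with $c^2=-b_2(N)$) and lift it to an integral class $\tilde c\in H^2(N;\Bbb Z)$ with $\tilde c\equiv w_2(N)\pmod 2$. Every closed oriented $4$-manifold admits a Spin$^c$ structure (Hirzebruch--Hopf, since $W_3(N)=0$), and Spin$^c$ structures form an $H^2(N;\Bbb Z)$-torsor under $\frak s\mapsto\frak s\otimes L$ with $c_1(\frak s\otimes L)=c_1(\frak s)+2c_1(L)$; thus the first Chern classes sweep out the entire set of characteristic integral lifts of $w_2(N)$, so some $\frak s_N$ has $c_1(\frak s_N)=\tilde c$ and hence $c_1^2(\frak s_N)=-b_2(N)$.

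The only nontrivial ingredient is Donaldson's theorem: without it the inequality would fail (e.g.\ the $E_8$-lattice has $0$ as a characteristic vector with $0^2=0>-8$), and Donaldson's exclusion of such exotic negative definite forms from smooth $4$-manifold topology is precisely what makes this statement true.
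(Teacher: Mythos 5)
Your proposal is correct and follows essentially the same route as the paper: Donaldson diagonalization of the negative definite form, the Wu-formula observation that $c_1(\frak{s})$ is characteristic so all its coordinates $a_i$ are odd and hence $a_i^2\geq 1$, and realization of the extremal characteristic vector $\sum_i e_i$ by twisting a given Spin$^c$ structure by a line bundle (equivalently, your torsor argument). The added remark about the $E_8$ lattice correctly identifies why Donaldson's theorem is the essential input.
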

\begin{proof}
If $b_2(N)=0$, it is obvious. The case of $b_2(N)>0$ can be seen
as follows.  Using Donaldson's theorem \cite{donal1,donal2}, we
diagonalize the intersection form $Q_N$ on $H^2(N;\Bbb
Z)/\textrm{torsion}$ over $\Bbb Z$ with a basis
$\{\alpha_1,\cdots,\alpha_{b_2(N)}\}$ satisfying
$Q_N(\alpha_i,\alpha_i)=-1$ for all $i$. Then for any Spin$^c$
structure $\frak{s}$, the rational part of $c_1(\frak{s})$ should
be of the form $$\sum_{i=1}^{b_2(N)}a_i\alpha_i$$ where each
$a_i\equiv 1$ mod 2, because $$Q_N(c_1(\frak{s}),\alpha)\equiv
Q_N(\alpha,\alpha)\ \ \ \ \textrm{mod}\ 2$$ for any $\alpha\in
H^2(N;\Bbb Z)$. Consequently $|a_i|\geq 1$ for all $i$ which means
$$c_1^2(\frak{s})=\sum_{i=1}^{b_2(N)}-a_i^2\leq -b_2(N),$$ and we
can get a Spin$^c$ structure $\frak{s}_N$ with
$$c_1(\frak{s}_N)\equiv\sum_i \alpha_i\ \ \ \ \textrm{modulo
torsion}$$ by tensoring any $\frak{s}$ with a line bundle $L$
satisfying  $$2c_1(L)+c_1(\frak{s})\equiv\sum_i \alpha_i\ \ \ \
\textrm{modulo torsion},$$ completing the proof.
\end{proof}

\begin{thm}\label{Nexam}
Let $X$ be one of $$S^4,\ \  \overline{\Bbb CP}_2,\ \ S^1\times
(L_1\#\cdots\#L_n),\ \ \textrm{and}\ \ \widehat{S^1\times L}$$
where  each $L_i$ and $L$ are quotients of $S^3$ by free actions
of finite groups, and $\widehat{S^1\times L}$ is the manifold
obtained from the surgery on $S^1\times L$ along an $S^1\times
\{pt\}$.

Then for any integer $l\geq 0$ and any smooth
closed oriented 4-manifold $Z$ with $b_2^+(Z)=0$  admitting a
metric of positive scalar curvature, $$X\ \#\ kl Z$$  satisfies the properties of $N$ in
Theorem \ref{firstth}, where the Spin$^c$ structure of $X \# kl Z$ is given by gluing any Spin$^c$ structure $\frak{s}_X$ on $X$ and any Spin$^c$ structure $\frak{s}_Z$ on $Z$ satisfying $c_1^2(\frak{s}_X)=-b_2(X)$ and $c_1^2(\frak{s}_Z)=-b_2(Z)$ respectively.
\end{thm}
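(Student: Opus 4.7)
The goal is to verify, for each candidate $X$, the four hypotheses imposed on $N$ in Theorem \ref{firstth}: $b_2^+=0$, the existence of a smooth orientation-preserving $\Bbb Z_k$-action with at least one free orbit, of a $\Bbb Z_k$-invariant metric of positive scalar curvature, and of a $\Bbb Z_k$-equivariant Spin$^c$ structure with $c_1^2=-b_2$. The Spin$^c$ structure $\frak{s}_N$ on $X\#klZ$ will be obtained by gluing $\frak{s}_X$ and $kl$ copies of $\frak{s}_Z$ across cylindrical necks, so additivity gives $c_1^2(\frak{s}_N)=c_1^2(\frak{s}_X)+kl\cdot c_1^2(\frak{s}_Z)=-b_2(X)-kl\cdot b_2(Z)=-b_2(N)$, using the preceding lemma to select $\frak{s}_X$ and $\frak{s}_Z$; similarly $b_2^+(N)=0$.

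I plan to choose the $\Bbb Z_k$-action to extend to an $S^1$-action in every case, so that Lemmas \ref{jacob} and \ref{joseph} become applicable. Concretely: rotation in an $\Bbb R^2$-plane on $S^4\subset\Bbb R^5$; the diagonal action $[z_0:z_1:z_2]\mapsto[z_0:e^{i\vartheta}z_1:z_2]$ on $\overline{\Bbb CP}_2$; rotation of the $S^1$ factor on $S^1\times (L_1\#\cdots\#L_n)$; and, for $\widehat{S^1\times L}$, taking the surgery curve to be an $S^1$-orbit with its canonical framing and performing the surgery $S^1$-equivariantly. In each case there are at least $l$ free $S^1$-orbits, hence free $\Bbb Z_k$-orbits, on $X$; at each such orbit one performs an equivariant connected sum with $k$ copies of $Z$ that are cyclically permuted by $\Bbb Z_k$, producing a $\Bbb Z_k$-action on $X\#klZ$. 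For the metric I would use the $S^1$-invariant PSC metrics already available: the round metric on $S^4$, the Fubini--Study metric on $\overline{\Bbb CP}_2$, the product of the round $S^1$ with a PSC metric on $L_1\#\cdots\#L_n$ built by splicing spherical metrics via the Gromov--Lawson neck construction \cite{GL}, and a PSC metric on $\widehat{S^1\times L}$ obtained by an $S^1$-equivariant Gromov--Lawson surgery. The equivariant Gromov--Lawson surgery of \cite{GL,sung1}, applied simultaneously at the $l$ free orbits with the given PSC metric on $Z$, then yields a $\Bbb Z_k$-invariant PSC metric on $X\#klZ$.

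For the equivariance of $\frak{s}_X$: both $S^4$ and $S^1\times(L_1\#\cdots\#L_n)$ are spin (every orientable 3-manifold is parallelizable), so Lemma \ref{jacob} lifts the $S^1$-action directly to the trivial Spin$^c$ bundle $(\textrm{spin})\otimes_{\Bbb Z_2}S^1$; here $b_2=0$ so $c_1^2=0=-b_2$ is automatic. For $\overline{\Bbb CP}_2$ the $S^1$-action preserves the complex structure, so the canonical Spin$^c$ structure is $S^1$-equivariant, and twisting by an $S^1$-equivariant power of the tautological line bundle (whose $S^1$-structure is inherited from the base) produces the Spin$^c$ structure with $c_1=\pm h$, $c_1^2=-1=-b_2$. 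The case of $\widehat{S^1\times L}$ is analogous: start from an $S^1$-equivariant Spin$^c$ structure on $S^1\times L$ via Lemma \ref{jacob}, perform the surgery equivariantly, and twist by an $S^1$-equivariant line bundle supported in a neighborhood of the $D^2\times S^2$ handle to land on the Spin$^c$ structure with $c_1^2=-1$; Lemma \ref{joseph} is available for the flat portion if needed. The main obstacle will be the non-spin cases $\overline{\Bbb CP}_2$ and $\widehat{S^1\times L}$: one has to exhibit an explicit $\Bbb Z_k$-equivariant line-bundle twist that realizes $c_1^2=-b_2$ and remains compatible with the cylindrical gluing to the $k$ copies of $Z$ at each free orbit. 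Once this is in place, restricting the $S^1$-equivariant structures to $\Bbb Z_k$ and assembling them with the cyclic permutation of the $Z$-summands completes the construction.
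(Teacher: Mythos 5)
Your overall strategy coincides with the paper's: realize each $\Bbb Z_k$-action as the restriction of an $S^1$-action preserving a PSC metric, build the $\Bbb Z_k$-invariant PSC metric on $X\#klZ$ by equivariant Gromov--Lawson surgery at the $k$ points of a free orbit, and use Lemmas \ref{jacob} and \ref{joseph} to lift the action to the Spin$^c$ structures. The treatments of $S^4$, $\overline{\Bbb CP}_2$ (where your twist of the canonical Spin$^c$ structure by equivariant powers of the tautological bundle is a repackaging of the paper's extension of the action to $S^5$ commuting with the Hopf fibration), and $S^1\times(L_1\#\cdots\#L_n)$ are essentially the paper's, except that in the last case you should note that \emph{every} Spin$^c$ structure (not just the trivial one) must be made equivariant; they are all torsion, pulled back from the 3-manifold, and handled using the triviality of the tangent bundle.

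The genuine gap is in the $\widehat{S^1\times L}$ case, where you have the topology wrong. The surgered manifold is a \emph{rational homology 4-sphere}: $\chi=2$ and $b_1=0$ force $b_2(\widehat{S^1\times L})=0$, with $H^2(\widehat{S^1\times L};\Bbb Z)\cong H_1(L;\Bbb Z)$ pure torsion. Hence there is no Spin$^c$ structure with $c_1^2=-1$, and the proposed ``twist by a line bundle supported near the $D^2\times S^2$ handle to land on $c_1^2=-1$'' is neither possible nor needed --- every Spin$^c$ structure automatically satisfies $c_1^2=0=-b_2$. (The manifold is also spin, contrary to your listing it among the ``non-spin cases.'') The actual content, which you defer as an ``obstacle,'' is to show that \emph{all} $|H_1(L;\Bbb Z)|$ torsion Spin$^c$ structures are $S^1$-equivariant. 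The paper does this by observing that each is the trivial Spin$^c$ bundle (equivariant by Lemma \ref{jacob}, since the manifold is spin) tensored with a torsion, hence flat, $S^1$-bundle; Lemma \ref{joseph} then applies because the $S^1$-action lifts to the universal cover, which the paper identifies as $(|\pi_1(L)|-1)(S^2\times S^2)$. Without this step the construction of the examples needed for the exotic-action theorems (which use precisely the full set of torsion Spin$^c$ structures on $\widehat{S^1\times L}$) is incomplete.
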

\begin{proof}
First, we will define  $\Bbb Z_k$ actions  preserving a metric of positive scalar curvature.
In fact, our actions on $X$ will be induced from such $S^1$ actions.

For $X=S^4$, one can take a $\Bbb Z_k$-action coming from a
nontrivial action of $S^1\subset SO(5)$ preserving a round metric. In this case, one can choose a free action or an action with fixed points also.

If $X=\overline{\Bbb CP}_2$, then one can use the following actions for some integers $m_1, m_2$ :
\begin{eqnarray}\label{exam}
j\cdot [z_0,z_1,z_2]=[z_0,e^{\frac{2jm_1}{k}\pi i}z_1,e^{\frac{2jm_2}{k}\pi i}z_2]
\end{eqnarray}
for $j\in \Bbb Z_k$, which preserve the Fubini-Study metric and has at least 3 fixed points $[1,0,0], [0,1,0], [0,0,1]$.

Before considering the next example, recall that every finite
group acting freely on $S^3$ is in fact conjugate to a subgroup of
$SO(4)$, and hence its quotient 3-manifold admits a metric of
constant positive curvature. This follows from the well-known
result of G. Perelman. (See \cite{morgan-tian1, morgan-tian2}.)

In $S^1\times (L_1\#\cdots\#L_n)$, the action is defined as a rotation along the $S^1$-factor, which is
obviously free and preserves a product metric. By endowing $L_1\#\cdots\#L_n$  with a metric of positive scalar curvature via the Gromov-Lawson surgery \cite{GL}, $S^1\times (L_1\#\cdots\#L_n)$ has a desired metric.

Finally the above-mentioned $S^1$ action on
$S^1\times L$ can be naturally extended to $\widehat{S^1\times
L}$, and moreover the Gromov-Lawson surgery \cite{GL} on
$S^1\times\{pt\}$ produces an $S^1$-invariant metric of positive
scalar curvature. Its fixed point set is $\{0\}\times S^2$ in the attached $D^2\times S^2$.

Now $X\# kl Z$ has an obvious $\Bbb Z_k$-action induced
from that of $X$ and a $\Bbb Z_k$-invariant metric which has positive
scalar curvature again by the Gromov-Lawson surgery.

It remains to prove that the above $\Bbb Z_k$-action on $X \# kl Z$ can be lifted to the Spin$^c$ structure obtained by gluing the above $\frak{s}_X$  and  $\frak{s}_Z$.
For this, we will only prove that any such $\frak{s}_X$  is $\Bbb Z_k$-equivariant. Then one can glue $k$ copies of $lZ$ in an obvious $\Bbb Z_k$-equivariant way. Recalling that the $\Bbb Z_k$ action on $X$ actually comes from an $S^1$ action, we will actually show the $S^1$-equivariance of  $\frak{s}_X$ on $X$.

On $S^4$,  the unique Spin$^c$ structure is trivial.  Any smooth $S^1$ action on $S^4$ which is spin can be lifted its trivial Spin$^c$ structure by Lemma \ref{jacob}.

Any smooth $S^1$ action on $\overline{\Bbb CP}_2$ is uniquely lifted to its orthonormal frame bundle $F$, and any  Spin$^c$ structure on $\overline{\Bbb CP}_2$
satisfying $c_1^2=-1$ is the double cover $P_1$ and $P_2$  of $F\oplus P$ and $F\oplus P^*$ respectively, where $P$ is the principal $S^1$ bundle over $\overline{\Bbb CP}_2$ with $c_1(P)=[H]$ and $P^*$ is its dual. Note that there is a base-preserving diffeomorphism between $P$ and $P^*$ whose total space is $S^5$. Obviously the action
(\ref{exam}) is extended to $S^5\subset \Bbb C^3$ commuting with the principal $S^1$ action of the Hopf fibration. By Lemma \ref{jacob} the $S^1$-action can be lifted to  $P_i\otimes_{S^1} S^1$ in an $S^1$-equivariant way, which is isomorphic to $P_i$ for $i=1,2$.

In case of $S^1\times (L_1\#\cdots\#L_n)$, any Spin$^c$ structure is
the pull-back from $L_1\#\cdots\#L_n$, and satisfies $c_1^2=0=-b_2$. Because the
tangent bundle is trivial, a free $S^1$-action is obviously
defined on its trivial spin bundle. Then the action can be
obviously extended to any Spin$^c$ structure, because it is
pulled-back from $L_1\#\cdots\#L_n$.

\begin{lem}
$\widehat{S^1\times L}$ is a rational homology 4-sphere, and  $$H^2(\widehat{S^1\times L};\Bbb Z)=H_1(L;\Bbb Z).$$ Its universal cover is $(|\pi_1(L)|-1)S^2\times S^2$ where $0(S^2\times S^2)$ means $S^4$.
\end{lem}
\begin{proof}
Since the Euler characteristic is easily computed to be 2 from the
surgery description,  and $b_1(\widehat{S^1\times L})=b_1(L)=0$,
it follows that $\widehat{S^1\times L}$ is a rational homology
4-sphere.

By the universal coefficient theorem,
\begin{eqnarray*}
H^2(\widehat{S^1\times L};\Bbb Z)
&=&\textrm{Hom}(H_2(\widehat{S^1\times L};\Bbb Z),\Bbb Z)\oplus \textrm{Ext}(H_1(\widehat{S^1\times L};\Bbb
Z),\Bbb Z)\\ &=& H_1(\widehat{S^1\times L};\Bbb Z)\\ &=& H_1(L;\Bbb Z).
\end{eqnarray*}

The universal cover is equal to the manifold obtained from $S^1\times S^3$ by performing surgery along  $S^1\times \{ |\pi_1(L)|\ \textrm{points in } S^3\}$, and hence it must be $(|\pi_1(L)|-1)S^2\times S^2$.
\end{proof}

By the above lemma, there are $|H_1(L;\Bbb Z)|$ Spin$^c$
structures on $\widehat{S^1\times L}$, all of which are torsion to satisfy
$c_1^2=0=-b_2(\widehat{S^1\times L})$. Since any $S^1$ bundle on  $\widehat{S^1\times L}$ is flat, and the $S^1$-action on  $\widehat{S^1\times L}$ can be obviously lifted to its universal cover, Lemma \ref{joseph} says that any $S^1$ bundle is $S^1$-equivariant under the $S^1$ action.

By the construction, $\widehat{S^1\times L}$ is spin, and hence the trivial Spin$^c$ bundle is $S^1$-equivariant by Lemma \ref{jacob}.
Any other Spin$^c$ structure is given by the tensor product over $S^1$ of the trivial Spin$^c$ bundle and an $S^1$ bundle, both of which are $S^1$-equivariant bundles. Therefore any Spin$^c$ bundle of $\widehat{S^1\times L}$ is  $S^1$-equivariant.

This completes all the proof.
\end{proof}


\section{Exotic group actions}
In this section, let's abbreviate $SW^G_{M,\mathfrak{s}}(U^d)$ for $d$ equal to the expected dimension of its $G$-monopole moduli space simply by $SW^G_{M,\mathfrak{s}}$ by abuse of notation. Generalizing the Seiberg-Witten polynomial $SW_M$ of $M$, let's define the $G$-monopole polynomial of $M$  as
$$SW^G_M:=\sum_{\mathfrak{s}}SW^G_{M,\mathfrak{s}}PD(c_1(\mathfrak{s}))\in \Bbb Z[H_2(M;\Bbb Z)^G],$$ where the summation is over the set of all $G$-equivariant
Spin$^c$ structures. We also use the term {\it mod 2 basic class} to mean the first Chern class of a Spin$^c$ structure with nonzero mod 2 Seiberg-Witten invariant.

Following \cite{GoS}, we say that a simply connected 4-manifold {\it dissolves} if it is diffeomorphic either to $$n{\Bbb CP}_2\#m\overline{\Bbb CP}_2\ \ \ \textrm{or to}\ \ \ \pm(n(S^2\times S^2)\#mK3)$$ for some $n,m\geq 0$ according to its homeomorphism type.

\begin{thm}\label{upgrade1}
Let $M$  be a smooth closed oriented connected 4-manifold and $\{M_i|i\in \frak I\}$ be a family of smooth 4-manifolds such that every $M_i$ is homeomorphic to $M$ and the numbers of mod 2 basic classes of $M_i$'s are all mutually different, but each $M_i\#l_i(S^2\times S^2)$ is diffeomorphic to $M\#l_i(S^2\times S^2)$ for an integer $l_i\geq 1$.

If $l_{max}:=\sup_{i\in\frak I}l_{i}<\infty$, then for any integers $k\geq 2$ and $l\geq l_{max}+1$, $$klM\#(l-1)(S^2\times S^2)$$
admits an $\frak I$-family of topologically equivalent but smoothly
distinct non-free actions of $\Bbb Z_k\oplus H$ where $H$ is any
group of order $l$ acting freely on $S^3$.
\end{thm}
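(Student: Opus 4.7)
The plan is to apply Theorem~\ref{firstth} to a $\mathbb{Z}_k$-manifold whose fundamental group is $H$, and then pass to the universal cover, so that $H$ appears as a free deck-transformation action commuting with the $\mathbb{Z}_k$-action.

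First, set $L=S^3/H$ and take $X=\widehat{S^1\times L}$ as in Theorem~\ref{Nexam}, endowed with the $\mathbb{Z}_k\subset S^1$-action rotating the first factor; by Theorem~\ref{Nexam} this $X$ fulfills every hypothesis imposed on $N$ in Theorem~\ref{firstth}, and its universal cover is $\tilde X\cong(l-1)(S^2\times S^2)$ on which $H$ acts freely as deck transformations. Assuming $M$ is simply connected, as in the intended examples, I would form the $\mathbb{Z}_k$-equivariant connected sum $\bar M_k^i:=kM_i\#X$ along a free $\mathbb{Z}_k$-orbit of $X$. Then $\pi_1(\bar M_k^i)=H$ and its universal cover is canonically diffeomorphic to
\[
W_i:=kl\,M_i\;\#\;(l-1)(S^2\times S^2),
\]
with $H$ still acting freely by deck transformations. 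Since the $S^1$-action on $X$ is isotopic to the identity, the $\mathbb{Z}_k$-subaction acts trivially on $\pi_1=H$ and lifts to $W_i$ commuting with the $H$-action; the combined action $\alpha_i$ on $W_i$ is a smooth, non-free $\mathbb{Z}_k\oplus H$-action, non-freeness coming from the fixed $S^2$ of $\mathbb{Z}_k$ in the attached $D^2\times S^2\subset X$.

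Next, exploiting $M_i\#l_i(S^2\times S^2)\cong M\#l_i(S^2\times S^2)$ together with $l-1\geq l_{\max}\geq l_i$, I would replace one $M_i$ summand at a time by $M$ inside a stabilized region $\#l_i(S^2\times S^2)$ drawn from the available $(l-1)(S^2\times S^2)$ pool (which survives each swap, since the swap is itself a stabilized diffeomorphism). Iterating across all $kl$ summands gives a diffeomorphism $W_i\cong klM\#(l-1)(S^2\times S^2)=:W$, through which each $\alpha_i$ is transported to a smooth $\mathbb{Z}_k\oplus H$-action on the single smooth manifold $W$. For any $i,j\in\mathfrak I$, a homeomorphism $M_i\approx M_j$ produces a $\mathbb{Z}_k$-equivariant homeomorphism $\bar M_k^i\approx\bar M_k^j$ that lifts to a $\mathbb{Z}_k\oplus H$-equivariant homeomorphism $W_i\approx W_j$; composing with the smooth identifications $W_i,W_j\cong W$ yields a $C^0$-conjugacy between the two actions on $W$.

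Finally, to distinguish the $\alpha_i$ up to smooth equivalence, I would compute the number of $(\mathbb{Z}_k\oplus H)$-monopole classes of $(W,\alpha_i)$. Because the $H$-subaction is free, the free-quotient identification of Theorem~\ref{nakamur} (applied with the group $H$) realizes the $H$-fixed locus of the ordinary Seiberg--Witten moduli space of $W$ as a disjoint union, over $H$-equivariant lifts of the Spin$^c$ structure, of ordinary Seiberg--Witten moduli spaces of the quotient $W/H\cong\bar M_k^i$; intersecting further with the $\mathbb{Z}_k$-fixed locus and selecting the genuinely $\mathbb{Z}_k$-invariant component as in the sublemma of Theorem~\ref{myLord}, the $(\mathbb{Z}_k\oplus H)$-monopole count for $(W,\alpha_i)$ is expressed in terms of the $\mathbb{Z}_k$-monopole count for $\bar M_k^i$. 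Theorem~\ref{firstth} then says this latter count equals, mod $2$, the number of mod $2$ basic classes of $M_i$, which by hypothesis is mutually distinct across $i\in\mathfrak I$; hence no two $\alpha_i$ are $C^\infty$-conjugate. The main obstacle I anticipate is in this last step: pinning down the $\mathbb{Z}_k$-equivariance of the identification of Theorem~\ref{nakamur} and correctly bookkeeping the sum over $H$-equivariant Spin$^c$ lifts downstairs, so that the mod $2$ count is faithfully transported from $\bar M_k^i$ to $W$.
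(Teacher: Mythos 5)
Your construction coincides with the paper's: $H$ acts as the deck transformation group of the $l$-fold cover $klM_i\#(l-1)(S^2\times S^2)\rightarrow \bar M_{i,k}:=kM_i\#\widehat{S^1\times L}$, the $\Bbb Z_k$-rotation of $\widehat{S^1\times L}$ lifts to that cover and commutes with $H$, and the stabilized diffeomorphisms $M_i\#l_i(S^2\times S^2)\cong M\#l_i(S^2\times S^2)$ together with $l-1\geq l_{\max}$ transport all the actions onto the single smooth manifold $klM\#(l-1)(S^2\times S^2)$. The only point of divergence is the final certification of smooth inequivalence. The paper never computes a $(\Bbb Z_k\oplus H)$-monopole invariant upstairs: it observes that a smooth conjugacy of two such actions would descend to a $\Bbb Z_k$-equivariant diffeomorphism of the $H$-quotients $\bar M_{i,k}$, and these quotients are already distinguished by their $\Bbb Z_k$-monopole polynomials, which by Theorem \ref{myLord} (with $b_1(\widehat{S^1\times L})=0$) are congruent mod $2$ to $SW_{M_i}\sum_{[\alpha]}[\alpha]$, summing over $H^2(\widehat{S^1\times L};\Bbb Z)$, and hence have mutually different numbers of monomials. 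Your route --- invoking Theorem \ref{nakamur} for the free $H$-subaction to push the computation up to $W$ and then intersecting with the $\Bbb Z_k$-fixed locus --- is plausible but runs into exactly the bookkeeping you flag: the $|H_1(L;\Bbb Z)|$ torsion Spin$^c$ structures on $\widehat{S^1\times L}$ all pull back to the same Spin$^c$ structure on $(l-1)(S^2\times S^2)$, so the monomial count is not transported verbatim upstairs and one would have to keep track of inequivalent lifts of the action instead; the quotient argument sidesteps this entirely. Two small further points: the relevant cover is the one associated to $\ker(\pi_1\to H)$ rather than the universal cover, so your restriction to simply connected $M$ is unnecessary (though harmless for the examples of Corollary \ref{upgrade2}); and the non-freeness of the combined action indeed comes from the fixed $S^2\subset D^2\times S^2$ of the $\Bbb Z_k$-action, exactly as you say.
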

\begin{proof}
Think of $klM\#(l-1)(S^2\times S^2)$ as $$klM_i\#(l-1)(S^2\times
S^2),$$ and our $H$ action is defined as the deck transformation
map of the $l$-fold covering map onto
$$\bar{M}_{i,k}:=kM_i\#\widehat{S^1\times L}$$ where $\widehat{S^1\times L}$ for $L=S^3/H$ is defined as in Theorem \ref{Nexam}. To
define a $\Bbb Z_k$-action, note that $\bar{M}_{i,k}$ has a $\Bbb Z_k$-action coming from the $\Bbb Z_k$-action of $\widehat{S^1\times L}$ defined in Theorem \ref{Nexam}, which is basically a
rotation along the $S^1$-direction. This $\Bbb Z_k$ action is
obviously lifted to the above $l$-fold cover, and  it commutes
with the above defined $H$ action. Thus we have an $\frak I$-family
of $\Bbb Z_k\oplus H$ actions on $klM\#(l-1)(S^2\times S^2)$,
which are all topologically equivalent by using the homeomorphism
between each $M_i$ and $M$.

Recall from Theorem \ref{Nexam} that all the Spin$^c$ structures
on $\widehat{S^1\times L}$ are $\Bbb Z_k$-equivariant and satisfy
$c_1^2=-b_2(\widehat{S^1\times L})=0$. By Theorem \ref{myLord} and
the fact that $b_1(\widehat{S^1\times L})=0$, for any Spin$^c$
structure $\frak{s}_i$ on $M_i$,
$$SW^{\Bbb Z_k}_{\bar{M}_{i,k},\bar{\frak{s}}_i}\equiv SW_{M_i,\frak{s}_i}\ \ \ \textrm{mod}\ 2,$$
and hence $$SW^{\Bbb
Z_k}_{\bar{M}_{i,k}}\equiv SW_{M_i}\sum_{[\alpha]\in
H^2(\widehat{S^1\times L}; \Bbb Z)}[\alpha]$$ modulo 2. Therefore $SW^{\Bbb
Z_k}_{\bar{M}_{i,k}}$ mod 2 for all $i$ have mutually different numbers of monomials, and hence the $\frak I$-family of $\Bbb Z_k\oplus H$ actions on $klM\#(l-1)(S^2\times S^2)$ cannot
be smoothly equivalent, completing the proof.
\end{proof}


\begin{cor}\label{upgrade2}
Let $H$ be a finite group of order $l\geq 2$ acting freely on $S^3$. For any $k\geq 2$, there exists an infinite
family of topologically equivalent but smoothly distinct non-free
actions of $\Bbb Z_k\oplus H$ on $$(klm+l-1)(S^2\times S^2),$$
$$(kl(n-1)+l-1)(S^2\times S^2)\#klnK3,$$
$$(kl(2n'-1)+l-1)\Bbb CP_2\# (kl(10n'+m'-1)+l-1)\overline{\Bbb CP}_2$$
 for infinitely many $m$, and any $m'\geq 1$, $n, n'\geq 2$. 
\end{cor}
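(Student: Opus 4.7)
The plan is to obtain each of the three claimed families of actions by applying Theorem \ref{upgrade1} with an appropriately chosen summand $M$ and an infinite family $\{M_i\}_{i\in\frak I}$ of exotic smooth structures on $M$, tuned so that $klM\#(l-1)(S^2\times S^2)$ matches the displayed target. The correspondence is as follows: for the first type take $M=m(S^2\times S^2)$; for the second take $M=(n-1)(S^2\times S^2)\#nK3$, which by Freedman's classification is homeomorphic to the elliptic surface $E(2n)$; and for the third take $M=(2n'-1)\Bbb CP_2\#(10n'+m'-1)\overline{\Bbb CP}_2$, which is simply connected and non-spin, so that $M\#(S^2\times S^2)\cong M\#\Bbb CP_2\#\overline{\Bbb CP}_2$ and the resulting target rewrites as the manifold asserted in the third bullet.

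For each such $M$, I would construct the family $\{M_i\}$ via Fintushel-Stern knot surgery on a cusp neighborhood of an elliptic fibration naturally sitting inside $M$ (or a blow-up thereof), using a sequence of knots $K_i$ whose Alexander polynomials have strictly increasing numbers of nonzero coefficients. The knot-surgery formula then yields pairwise distinct counts of mod $2$ Seiberg-Witten basic classes on the $M_i$, while Freedman's theorem guarantees each $M_i$ remains homeomorphic to $M$. A dissolving result of Wall-type (refined by Auckly, Akbulut, and others) gives $M_i\#(S^2\times S^2)\cong M\#(S^2\times S^2)$, so one may take $l_i=1$ for every $i$, whence $l_{\max}=1<\infty$ and Theorem \ref{upgrade1} applies for every $l\geq 2$, in particular for $l=|H|$. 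The hypothesis $n\geq 1$ in the second case and $n'\geq 2$ in the third case ensure $b_2^+(M)\geq 3$, so that the mod $2$ Seiberg-Witten invariants are well-defined and the basic-class counts have unambiguous meaning. The restriction to infinitely many (rather than all) $m$ in the first case reflects the fact that exotic small symplectic 4-manifolds homeomorphic to $m(S^2\times S^2)$ have been constructed, by Akhmedov-Park and successive refinements, for an infinite but not a priori complete collection of $m$.

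The main obstacle is the simultaneous achievement, for each target $M$, of three properties by the family $\{M_i\}$: a common homeomorphism type $M$, pairwise distinct counts of mod $2$ basic classes, and a uniform finite bound $l_{\max}$ on the number of $S^2\times S^2$ summands needed to dissolve $M_i$ into $M$. This is where all the substantive input from the theory of exotic simply connected 4-manifolds enters the argument. Once such families are in hand, the rest is the routine identification of $klM\#(l-1)(S^2\times S^2)$ with each of the three listed manifolds, using the classical fact that $S^2\times S^2$ and $\Bbb CP_2\#\overline{\Bbb CP}_2$ become diffeomorphic after connected sum with any simply connected non-spin 4-manifold, followed by direct invocation of Theorem \ref{upgrade1}.
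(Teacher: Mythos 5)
Your treatment of the second and third families is essentially the paper's argument: take $M$ homeomorphic to $E(2n)$ (resp.\ to $E(n')\#m'\overline{\Bbb CP}_2$), produce the exotica $M_i$ by Fintushel--Stern knot surgery with Alexander polynomials giving arbitrarily many mod~$2$ coefficients, use Akbulut--Auckly to get $l_i=1$, and dissolve $klM\#(l-1)(S^2\times S^2)$ via Mandelbaum--Gompf and the non-spin trick $S^2\times S^2\leadsto\Bbb CP_2\#\overline{\Bbb CP}_2$. The only caveat there is that the number of \emph{mod $2$} terms of $\Delta_K([T]^2)([T]-[T]^{-1})^{2n-2}$ is not simply the number of nonzero coefficients of $\Delta_K$ because of possible cancellation in the product; you need to choose the knots with widely separated exponents (the paper uses $\Delta_K(t)=1+\sum_{j=1}^{2d}(-1)^j(t^{jn}+t^{-jn})$) so that the counts really are pairwise distinct.

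The genuine gap is in the first family. Knot surgery on a cusp neighborhood inside a manifold homeomorphic to $m(S^2\times S^2)$ cannot produce the required family: for $m\geq 2$ the standard $m(S^2\times S^2)$ is a connected sum with $b_2^+>0$ on both sides, so its Seiberg--Witten invariants vanish, and the knot-surgery formula $SW_{X_K}=SW_X\cdot\Delta_K(t^2)$ then yields nothing to count. Your fallback, the Akhmedov--Park type small exotic symplectic manifolds, also does not supply what Theorem~\ref{upgrade1} needs: those examples are minimal symplectic $4$-manifolds, whose basic classes are just $\pm c_1$, so the \emph{number} of mod~$2$ basic classes does not vary across the family, and they are not known to dissolve after a uniformly bounded number of stabilizations. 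The paper instead invokes the specific construction of Hanke--Kotschick--Wehrheim: manifolds homeomorphic to $m(S^2\times S^2)$ built as fiber sums of multiplicity-$r$ logarithmic transforms of $E(2n)$ with a fixed symplectic $4$-manifold along a symplectic torus. There the Seiberg--Witten polynomial $([T]^r-[T]^{-r})^{2n-2}([T]^{r-1}+\cdots+[T]^{1-r})$ has arbitrarily many mod~$2$ terms as $r\to\infty$, these classes survive the fiber sum by Taubes' gluing formula, and the resulting manifolds dissolve after one stabilization, giving $l_i=1$. Without an input of this kind the first bullet of the corollary is not established.
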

\begin{proof}
By the result of B. Hanke, D. Kotschick, and J. Wehrheim \cite{Kot}, $m(S^2\times S^2)$ for infinitely many $m$ has the property of $M$ in the above theorem with each $l_i=1$ and $|\frak I|=\infty$. 
The different smooth structures of their examples are constructed by fiber-summing  a logarithmic transform of $E(2n)$ and a certain symplectic 4-manifold along a symplectically embedded torus, and different numbers of mod 2 basic classes are due to those different logarithmic transformations. Indeed the Seiberg-Witten polynomial of the multiplicity $r$ logarithmic transform of $E(2n)$ is given by 
$$([T]^r-[T]^{-r})^{2n-2}([T]^{r-1}+[T]^{r-3}+\cdots +[T]^{1-r})$$ whose number of terms with coefficients mod 2 can be made arbitrarily large by taking $r$ sufficiently large, and the fiber sum with the other symplectic 4-manifold is performed on a fiber in an  $N(2)$ disjoint from the Gompf nucleus $N(2n)$ where the log transform is performed so that all these mod 2 basic classes survive the fiber-summing by the gluing formula of C. Taubes \cite{taubes}.
Therefore $(klm+l-1)(S^2\times S^2)$ has desired actions by the above theorem.

For the second example, we use a well-known fact that $E(n)$ for $n\geq 2$ also has the above properties of $M$ in the above theorem with each $l_i=1$, where its exotica $M_i$'s are $E(n)_K$ for a knot $K\subset S^3$
by the Fintushel-Stern knot surgery. Recall the theorem by S. Akbulut \cite{Akb} and D. Auckly \cite{Auc} which says that for any smooth closed simply-connected $X$ with an embedded torus $T$ such that $T\cdot T=0$ and $\pi_1(X-T)=0$, a knot-surgered manifold $X_K$ along $T$ via a knot $K$ satisfies $$X_K\#(S^2\times S^2)=X\#(S^2\times S^2).$$
And from the formula
\begin{eqnarray*}
SW_{E(n)_K}&=& \Delta_K([T]^2)([T]-[T]^{-1})^{n-2}
\end{eqnarray*}
where $\Delta_K$ is the symmetrized Alexander polynomial of $K$,
one can easily see that the number of mod 2 basic classes of
$E(n)_K$ can be made arbitrarily large by choosing $K$
appropriately. (For example, take $K$ with
$$\Delta_K(t)=1+\sum_{j=1}^{2d}(-1)^j(t^{jn}+t^{-jn})$$ for
sufficiently large $d$.) Therefore $$klE(2n)\# (l-1)(S^2\times
S^2)=klnK3\#(kl(n-1)+l-1)S^2\times S^2$$ has desired actions, where
we used the fact that $S\#(S^2\times S^2)$ dissolves for any
smooth closed simply-connected elliptic surface $S$ by the work of
R. Mandelbaum \cite{ma} and R. Gompf \cite{Go3}.

For the third example, one can take $M$ to be
$E(n')\#m'\overline{\Bbb CP}_2$ for $n'\geq 2, m'\geq 1$, where its exotica $M_i$'s are $E(n')_K\#m'\overline{\Bbb CP}_2$ for a knot $K\subset S^3$, because
\begin{eqnarray*}
SW_{E(n')_K\#m'\overline{\Bbb CP}_2}&=& SW_{(E(n')\#m'\overline{\Bbb CP}_2)_K}\\ &=& \Delta_K([T]^2)([T]-[T]^{-1})^{n'-2}\prod_{i=1}^{m'} ([E_{i}]+[E_{i}]^{-1}),
\end{eqnarray*}
where $E_i$'s denote the exceptional divisors, and we used the fact that $E(n')$ is of simple type. Since
$E(n')\#\overline{\Bbb CP}_2$ for any $n'$ is non-spin,
$$kl(E(n')\#m'\overline{\Bbb CP}_2)\#(l-1)(S^2\times S^2)=kl(E(n')\#m'\overline{\Bbb CP}_2)\#(l-1)(\Bbb CP_2\# \overline{\Bbb CP}_2),$$
and it dissolves into the connected sum of $\Bbb CP_2$'s and
$\overline{\Bbb CP}_2$'s, using the dissolution (\cite{ma,Go3}) of
$E(n')\#\Bbb CP_2$ into $2n'\Bbb CP_2\#(10n'-1)\overline{\Bbb
CP}_2$.
\end{proof}

\begin{rmk}
For other combinations of $K3$ surfaces and $S^2\times S^2$'s in the above corollary, one can use B. Hanke, D. Kotschick, and J. Wehrheim's other examples in \cite{Kot}.
One can also construct many other such examples of $M$ with infinitely many exotica which become diffeomorphic after one stabilization by using the knot surgery.

Any finite group acting freely on $S^3$ is in fact a subgroup of
$SO(4)$ by the well-known result of G. Perelman
(\cite{morgan-tian1, morgan-tian2}), and Theorem \ref{upgrade1}
and Corollary \ref{upgrade2} can be generalized a little further.
(See \cite{sung3}.)
\end{rmk}

\bigskip

\noindent{\bf Acknowledgement.} The author would like to express sincere thanks to Prof. Ki-Heon Yun for helpful discussions and supports.


\begin{thebibliography}{99}

\bibitem{Akb} S. Akbulut, {\em Variations on Fintushel-Stern knot surgery on 4-manifolds}, Turkish J. of Math. {\bf 26} (2002), 81--92.

\bibitem{Ak} S. Akbulut, {\em Cappell-Shaneson homotopy spheres are standard}, Ann. of Math. {\bf 171} (2010), 2171--2175.

\bibitem{Auc} D. Auckly, {\em Families of four-dimensional manifolds that become mutually diffeomorphic after one stablization}, Proceedings of the Pacific Institute for the Mathematical Sciences Workshop ``Invariants of Three-Manifolds''(Calgary, AB, 1999) {\bf 127} (2003), 277--298.

\bibitem{bal1} S. Baldridge,  {\em Seiberg-Witten invariants of 4-manifolds with free circle actions}, Commun. Contemp. Math, {\bf 3} (2001), 341--353.

\bibitem{bal2} S. Baldridge, {\em Seiberg-Witten invariants, orbifolds, and circle actions}, Trans. Amer. Math. Soc. {\bf 355} (2003), 1669--1697.

\bibitem{bal3} S. Baldridge, {\em Seiberg-Witten vanishing theorem for $S^1$-manifolds with fixed points}, Pacific J. Math. {\bf 217} (2004), 1--10.

\bibitem{BF} S. Bauer and M. Furuta, {\em A stable cohomotopy refinement of Seiberg-Witten invariants: I}, Invent. Math. {\bf 155} (2004), 1--19.

\bibitem{bau} S. Bauer, {\em A stable cohomotopy refinement of Seiberg-Witten invariants: II}, Invent. Math. {\bf 155} (2004), 21--40.

\bibitem{bauer} S. Bauer, {\em Refined Seiberg-Witten invariants}, In : {\em Different faces of geometry},
International Mathematical Series Vol. {\bf 3} (2004), 1--46.

\bibitem{CS} S. Capell and J. Shaneson, {\em Some new four-manifolds}, Ann. of Math. {\bf 104} (1976), 61--72.

\bibitem{chen1} W. Chen, {\em Pseudoholomorphic curves in four-orbifolds and some applications}, in `Geomtery and Topology of Manifolds', Fields Institute Communications {\bf 47} (2005), 11--37.

\bibitem{chen2} W. Chen, {\em Smooth $s$-cobordisms of elliptic 3-manifolds}, J. Diff. Geom. {\bf 73} (2006), 413--490.

\bibitem{cho} Y. S. Cho, {\em  Finite group action on Spin$^c$ bundles}, Acta Math. Hungar. {\bf 84 (1-2)} (1999), 97--114.

\bibitem{donal1} S. K. Donaldson, {\em An application of gauge theory to four dimensional topology}, J. Diff. Geom. {\bf 18} (1983), 279--315.

\bibitem{donal2} S. K. Donaldson, {\em The orientation of Yang-Mills moduli spaces and 4-manifold topology}, J. Diff. Geom. {\bf 26} (1987), 397--428.

\bibitem{FS1} R. Fintushel and R. Stern, {\em An exotic free involution on $S^4$}, Ann. of Math. {\bf 113} (1981), 357--365.

\bibitem{FS2} R. Fintushel and R. Stern, {\em Exotic group action on simply connected smooth 4-manifolds}, J. of Topology {\bf 2} (2009), 769--778.


\bibitem{Go1} R. Gompf, {\em Killing the Akbulut-Kirby 4-sphere, with relevance to the Andrews-Curtis and Schoenflies problems}, Topology {\bf 30} (1991), 97--115.

\bibitem{Go2} R. Gompf, {\em More Capell-Shaneson spheres are standard}, arXiv:0908.1914.

\bibitem{Go3} R. Gompf, {\em Sums of elliptic surfaces}, J. Diff.
Geom. {\bf 34} (1991), 93--114.

\bibitem{GoS} R. Gompf and A. Stipsicz, {\em 4-Manifolds and Kirby Calculus}, American Mathematical Society, 1999.

\bibitem{GL} M. Gromov and H. B. Lawson,  {\em  The classification of simply connected manifolds of positive scalar curvature}, Ann. of Math. {\bf 111} (1980) 423--434.

\bibitem{Kot} B. Hanke, D. Kotschick, and J. Wehrheim, {\em Dissolving four-manifolds and positive scalar curvature}, Math. Zeitschrift {\bf 245} (2003), 545--555

\bibitem{IL} M. Ishida and C. LeBrun, {\em Curvature, connected Sums, and Seiberg-Witten Theory}, Comm. Anal. Geom. {\bf 11} (2003), 809--836.

\bibitem{kling} W. Klingenberg, {\em Riemannian geometry}, Walter de Gruyter, 1995.

\bibitem{KM} P. Kronheimer and T. Mrowka, {\em Monopoles and three-manifolds}, Cambridge University Press, 2008.

\bibitem{law-yau} H. B. Lawson and S. T. Yau, {\em Scalar curvature, nonabelian group actions and the degree of symmetry of exotic spheres}, Comm. Math. Helv. {\bf 49} (1974), 232--244.

\bibitem{ma} R. Mandelbaum, {\em Decomposing analytic surfaces}, in ``Geomtric Topology", Proc. 1977 Georgia Topology Conference, Academic Press, 1979, 147--217.

\bibitem{morgan} J. Morgan, {\em The Seiberg-Witten equations and applications to the topology of smooth four-manifold}, Princeton University Press, 1996.

\bibitem{morgan-tian1} J. Morgan and G. Tian, {\em Ricci flow and the Poincar\'e Conjecture}, American Mathematical Society, 2007.

\bibitem{morgan-tian2} J. Morgan and G. Tian, {\em Completion of the Proof of the Geometrization Conjecture}, arXiv:0809.4040.

\bibitem{naka} N. Nakamura, {\em A free $\Bbb Z_p$-action and the Seiberg-Witten invariants}, J. Korean Math. Soc. {\bf 39} (2002), No. 1, 103--117.

\bibitem{naka1} N. Nakamura, {\em Bauer-Furuta invariants under $\Bbb Z_2$-actions}, Math. Z. {\bf 262} (2009), 219-233.

\bibitem{naka2} N. Nakamura, {\em Mod $p$ equality theorem for Seiberg-Witten invariants under $\Bbb Z_p$-action}, arXiv:0905.3022.

\bibitem{nicol} L. I. Nicolaescu, {\em Notes on Seiberg-Witten Theory}, American Mathematical Society, 2000.

\bibitem{ruan} Y. Ruan, {\em Virtual neighborhoods and the monopole equations}, Topics in symplectic 4-manifolds, 101--116, First Int. Press Lect. Ser. I, Int. Press, Cambridge, MA, 1998.

\bibitem{RW} Y. Ruan and S. Wang, {\em Seiberg-Witten invariants and double covers of 4-manifolds}, Comm. Anal. Geom. {\bf 8} (2000), No.3, 477--515.

\bibitem{safari} P. Safari, {\em Gluing Seiberg-Witten Monopoles}, Comm. Anal. Geom. {\bf 13} (2005), No. 4, 697--725.

\bibitem{sung1} C. Sung, {\em Surgery, curvature, and minimal volume}, Ann. Global Anal. Geom. {\bf 26} (2004), 209--229.



\bibitem{sung2} C. Sung, {\em Surgery, Yamabe invariant, and Seiberg-Witten theory}, J. Geom. Phys. {\bf 59} (2009), 246--255.


\bibitem{sung3} C. Sung, {\em $G$-monopole classes, Ricci flow, and Yamabe invariants of 4-manifolds}, Geom. Ded. {\bf 169} (2014), 129--144.


\bibitem{szymik} M. Szymik, {\em Bauer-Furuta invariants and Galois symmetries}, Q. J. Math. {\bf 63} (2012) no. 4, 1033--1054.

\bibitem{taubes} C. H. Taubes, {\em The Seiberg-Witten invariants and 4-manifolds with essential tori}, Geom. Topol. {\bf 5} (2001), 441--519.

\bibitem{vid1} S. Vidussi, {\em Seiberg-Witten theory for $4$-manifolds decomposed along  $3$-manifolds of positive scalar curvature}, Pr\'epublication \'Ecole Polytechnique {\bf 99-5} (1999).

\bibitem{Ue} M. Ue, {\em Exotic group actions in dimension four and Seiberg-Witten theory}, Proc. Japan Acad. Ser. A Math. Sci. {\bf 74} (1998), 68--70.

\end{thebibliography}
\end{document}